\tikzset{%
	symbol/.style={%
		draw=none,
		every to/.append style={%
			edge node={node [sloped, allow upside down, auto=false]{$#1$}}}
	}
}
\tikzset{curve/.style={settings={#1},to path={(\tikztostart)
    .. controls ($(\tikztostart)!\pv{pos}!(\tikztotarget)!\pv{height}!270:(\tikztotarget)$)
    and ($(\tikztostart)!1-\pv{pos}!(\tikztotarget)!\pv{height}!270:(\tikztotarget)$)
    .. (\tikztotarget)\tikztonodes}},
    settings/.code={\tikzset{quiver/.cd,#1}
        \def\pv##1{\pgfkeysvalueof{/tikz/quiver/##1}}},
    quiver/.cd,pos/.initial=0.35,height/.initial=0}
\newcommand{\nada}{\varnothing}
\renewcommand{\epsilon}{\varepsilon}
\renewcommand{\phi}{\varphi}
\newcommand{\A}{\mathcal{A}}
\newcommand{\C}{\mathcal{C}}
\newcommand{\D}{\mathcal{D}}
\newcommand{\G}{\mathcal{G}}
\renewcommand{\L}{\mathcal{L}}
\newcommand{\M}{\mathcal{M}}
\newcommand{\Tau}{\mathcal{T}}
\newcommand{\U}{\mathcal{U}}
\newcommand{\V}{\mathcal{V}}
\newcommand{\W}{\mathcal{W}}
\newcommand{\X}{\mathcal{X}}
\newcommand{\Z}{\mathbb{Z}}
\newcommand{\id}{\text{id}}
\newcommand{\cat}{\mathsf}
\newcommand{\mono}{\rightarrowtail}
\renewcommand{\hom}{\text{Hom}}
\newcommand{\vcat}{\V\text{-}\cat{Cat}}
\newcommand{\ucat}{\U\text{-}\cat{Cat}}
\newcommand{\bcg}{\widetilde{G}}
\numberwithin{equation}{section}
\theoremstyle{plain}
\newtheorem{theorem}[equation]{Theorem}
\newtheorem{corollary}[equation]{Corollary}
\newtheorem{lemma}[equation]{Lemma}
\newtheorem{proposition}[equation]{Proposition}
\theoremstyle{definition}
\newtheorem{definition}[equation]{Definition}
\newtheorem{example}[equation]{Example}
\newtheorem{remark}[equation]{Remark}
\newtheorem{hypothesis}[equation]{Hypothesis}
\begin{document}

\begin{spacing}{1.05}

\title{Enriched coverages and sheaves under change of base}

\author{Ariel E. Rosenfield}
\email{rosenfia@uci.edu}
\address{Department of Mathematics\\ University of California, Irvine \\
440R Rowland Hall\\ Irvine, CA 96267--3875 \\ USA}

\date{\today}

\begin{abstract} We investigate how change of base of enrichment via a faithful, conservative right adjoint functor interacts with enriched coverages and sheaves on a given enriched category. We prove that change of base via such a functor gives rise both to an injective mapping on subobjects in enriched presheaf categories, and to an injective mapping on enriched coverages. In case the base change functor is also full, the enriched associated sheaf construction on a presheaf category commutes with base change.
\end{abstract}

\maketitle

\setcounter{tocdepth}{2} 
\makeatletter
\def\l@subsection{\@tocline{2}{0pt}{2.5pc}{5pc}{}}
\def\l@subsubsection{\@tocline{2}{0pt}{5pc}{7.5pc}{}} 
\makeatother
\tableofcontents

\section{Introduction}

As outgrowths of the move to formalize algebraic geometry in terms of abelian categories, Grothendieck topologies and their accompanying categories of sheaves arose in the early 1960s as a framework for defining cohomology theories on schemes. Roughly speaking, a Grothendieck topology on a category $\C$ can be regarded as a way to specify, for all objects $U$ of $\C$, which objects of $\C$ cover $U$. This is in exactly the same sense as, given a topological space $X$ and an open set $U \subset X$, we might ask when $\bigcup_{i \in I} U_i = U$ for some family $\{U_i : i \in I\}$ of opens of $X$. Enriched categories, where the hom-sets of ordinary category theory are replaced, more generally, by objects of a closed monoidal category $\V$, were first introduced in the mid-1960s in the work of Maranda \cite{Maranda_1965} and B\'{e}nabou \cite{benabou-enriched}, among others. Around the same time, Gabriel introduced in \cite[V.2, p. 411]{gabriel} the notion of a (right) linear topology (\textit{topologie lin\'{e}aire \`{a} droite}) on a ring, an early example of an enriched Grothendieck topology in the particular case of a category with one object enriched over $\V = \cat{Ab}$.

The definition of a Grothendieck topology in terms of sieves on objects $x \in \C$ (that is, subobjects of $\C(-,x)$) is perhaps the most straightforwardly generalizable to the enriched setting. For a nice enough base category $\V$, enriched Grothendieck topologies on a $\V$-category $\C$ (now taken to be families of subfunctors of enriched hom-functors), their accompanying sheaves, and their correspondence with localizations of $[\C^\text{op},\V]$, were introduced by Borceux and Quinteiro in 1996 with the publication of \cite{bq}. More recently, details of the theory of enriched sheaves in the case $\V = \cat{Ab}$ were established in the 2000s by Lowen in \cite{Lowen_2004} and \cite{Lowen_2016}; and in 2020 by Coulembier \cite{Coulembier}.

Given a category $\C$ enriched over $(\V, \otimes, *_\V)$ and a lax monoidal functor $G : \V \to \U$, $G$ canonically induces a 2-functor $$G_* : \V\text{-}\cat{Cat} \to \U\text{-}\cat{Cat}$$ which acts via an operation called `base change' or `change of base.' Base change first appeared in the literature around the same time as enriched categories themselves, with Eilenberg and Kelly's publication of \cite{Eilenberg_Kelly_1966}, and is fundamental to the theory of enriched categories, in part because it allows one to view a $\V$-category $\C$ as an ordinary category by applying the functor $$ \hom_\V(*_\V,-) : \V \to \cat{Set} $$ to the hom-objects of $\C$. Many of the technical results in Section 3 of the current work rely on the results and style of argument developed in Cruttwell's 2008 doctoral thesis \cite{cruttwell}, which, toward understanding normed spaces, addressed in detail the question of how base change interacts with the monoidal structures on $\V$ and $\U$.

A central theme of this work is the following: Changing base via a particular $G$ may result in more or less loss of information about the hom-objects of $\C$. To illustrate, we consider the functors $$ \hom_\cat{Ab}(\Z,-) : \cat{Ab} \to \cat{Set} \quad \text{ and } \quad \hom_{\cat{grMod}_k}(k,-) : \cat{grMod}_k \to \cat{Set}, $$ where $k$ is a field, and gradings are taken over $\Z$. Letting $\V$ be either of $\cat{Ab}$ or $\cat{grMod}_k$, we define the hom-objects of the $\cat{Set}$-category $G_*\C$ to be $$ G_*\C(x,y) := G(\C(x,y)). $$ In the former case, the hom-sets resulting from base change are in bijection with the underlying sets of the original hom-objects, and the $\U$-topology resulting from changing the base of a $\V$-topology is no coarser than the one we started with. In the latter case, however, for a graded $k$-module $M := \C(x,y)$, we only recover the set $$\hom_\V(k,M) \cong \hom_k(k, M_0) \cong M_0$$ of degree-0 elements of $M$ after changing base---in this case, the $\U$-topology resulting from a given $\V$-topology is much coarser. The key difference between these two examples lies in whether or not $\hom_\V(*_\V,-)$ is faithful; or equivalently, whether $\{*_\V\}$ is a dense generating family for $\V$.

Below, we examine situations where this `loss' is minimal, and situations where changing base results in topologies which are coarser than the ones we started with, as in \ref{bad-gabriel}.

\subsection{Acknowledgements} This work constitutes a portion of the author's Ph.D. thesis. Thanks to Manny Reyes for all his support, and to So Nakamura and Cody Morrin for their comments and questions on early drafts. Thanks also to Ana Luiza Ten\'orio for discussions leading to Example \ref{example:preorder-gt}.

\section{Preliminaries}

Let $(\V, \otimes, I)$ denote a symmetric monoidal closed category. By a \textbf{small} $\V$-category, we mean one which is equivalent to a $\V$-category with a small set of objects. We will always use $\C$ to denote a small $\V$-category. 

To ensure continuity of this work with that of Borceux-Quinteiro \cite{bq} and Kelly \cite{kelly-structures}, we make the following assumptions on $\V$:

\begin{hypothesis} \label{assumptions-V} Unless otherwise indicated, 

    \begin{enumerate}

    \item[(i)] $\V$ is locally finitely presentable (so $\V$ is well-powered, and admits a dense generating set $\G_\V$ of finitely presentable objects);

    \item[(ii)] as an unenriched category, $\V$ is locally small;

    \item[(iii)] as a self-enriched category, $\V$ admits all small conical limits and colimits (so is tensored and cotensored over itself);

    \item[(iv)] a finite tensor product of finitely presentable objects of $\V$ is again finitely presentable;

    \item[(v)] $\V$ is regular in the sense of \cite{barr}.
    
\end{enumerate}
\end{hypothesis} 

Examples of categories which satisfy these conditions include: $\cat{Set}$, $\cat{Ab}$; $\cat{Cat}$ and $\cat{sSet}$; the categories $\cat{Mod}_k$ of $k$-modules, $\cat{grMod}_k$ of graded $k$-modules, and $\cat{Coalg}_k$ of $k$-coalgebras for $k$ a commutative ring.

\subsection{Change of base} A very detailed treatment of this topic can be found in \cite[4]{cruttwell}, but for convenience, we recount the bare rudiments here. Let $$ (\mathcal{U}, \otimes, *_\U) \quad \text{and} \quad (\V, \otimes, *_\V) $$ be categories satisfying Hypothesis \ref{assumptions-V}. (We will never need to distinguish between the two monoidal operations, so we use the same symbol for both.) Denote an identity morphism in an enriched category $\X$ by $\text{id}^\X$, and a composition morphism in $\X$ by $\circ^\X$. For visual simplicity, we will often omit subscripts which would ordinarily indicate the domain objects of the morphisms $\text{id}$ and $\circ$. 

First, we recall the definition of the underlying category of $\C$, which we take to be a small $\V$-category, as usual.

\begin{definition} \label{underlying-category} \textbf{(Underlying category construction.)} Define an ordinary category $\C_0$ by setting $\text{Ob}(\C_0) = \text{Ob}(\C)$ and $\C_0(x,y) = \hom_\V(*_\V,\C(x,y))$. Given morphisms $g : x \to y$ and $f : y \to z$ in $\C_0$, we define the composite $f \cdot g$ by \[\begin{tikzcd}
	{*_\V} & {*_\V \otimes *_\V} & {\C(y,z) \otimes \C(x,y)} & {\C(x,z)}
	\arrow["\sim", from=1-1, to=1-2]
	\arrow["{f \otimes g}", from=1-2, to=1-3]
	\arrow["{\circ^\C}", from=1-3, to=1-4]
\end{tikzcd}.\]
\end{definition}

We will also make use of pre- and post-composition by morphisms in the underlying category, so we recall the definition:

\begin{definition} \label{def:pre-post-composition}
Given a morphism $g : *_\V \to \C(y,z)$ in $\V$, define
$$ g^* : \C(z,x) \cong \C(z,x) \otimes *_\V \xrightarrow{\id \otimes g} \C(z,x) \otimes \C(y,z) \xrightarrow{\circ^\C} \C(y,x)$$
and
$$ g_* : \C(x,y) \cong *_\V \otimes \C(x,y) \xrightarrow{g \otimes \id} \C(y,z) \otimes \C(x,y) \xrightarrow{\circ^\C} \C(x,z).$$
\end{definition}

Given a lax monoidal functor $G : \V \to \mathcal{U}$ with coherence morphisms $$ u : *_\V \to G(*_\U), \quad m_{xy} : G( x) \otimes G(y) \to G(x \otimes y),$$ we have a 2-functor ${G_* : \vcat \to \ucat}$. For notational convenience, we unpack this as follows:

\begin{definition} \label{cb-functor} With $G$ as above,

    \begin{enumerate}
        \item[(i)] form a $\mathcal{U}$-category $G_*\C$ by setting \begin{align*} \text{Ob}(G_*\C) & := \text{Ob}(\C), \\ G_*\C(x,y) &:= G(\C(x,y)), \\ \text{id}^{G_*\C} &:= G(\text{id}^\C) \cdot u  \\ \circ^{G_*\C} &:= G(\circ^\C) \cdot m . \end{align*}
    
        \item[(ii)] For a $\V$-functor $A : \C \to \D$, let \[ G_*A : G_*\C \to G_*\D\] denote the $\mathcal{U}$-functor defined by \[ G_*Ax := Ax \; \text{ and } \; (G_*A)_{xy} := GA_{xy} : G(\C(x,y)) \to G(\D(Ax, Ay)).\]

        \item[(iii)] For a $\V$-natural transformation $$ \{\alpha_x : *_\V \to \D(Ax, Bx)\},$$ let $G_*\alpha$ denote the $\mathcal{U}$-natural transformation $$ \{ G(\alpha_x) \cdot u : *_\U \to G(\D(Ax, Bx)) \} .$$ 
    \end{enumerate}
    
\end{definition}

We briefly note that Definition \ref{underlying-category} is a special case of base change using the monoidal functor $\hom_\V(*_\V,-) : \V \to \cat{Set}$.

\subsection{Four views on a monoidal adjunction} For the remainder of the work, we will be concerned almost exclusively with the case where the functor $G : \U \to \V$ is a right adjoint in the 2-category $\cat{MonCat}_\ell$ of monoidal categories and lax monoidal functors. Establishing notation, we suppose given a monoidal adjunction \begin{equation} \label{main-guy}
\begin{tikzcd}
	\U \arrow[r,bend left,"F",""{name=A, below}] & \V \arrow[l,bend left,"G",""{name=B,above}] \arrow[from=A, to=B, symbol=\dashv]
\end{tikzcd},
\end{equation} whose unit and counit we denote respectively by $\eta$ and $\epsilon$. Recall that by \cite[1.4]{doctrinal-adjunction}, the left adjoint of a monoidal adjunction is necessarily strong monoidal. Recall also that by \cite[2.1.3]{Riehl_Verity_2022}, the monoidal adjunction \ref{main-guy} induces a 2-adjunction \[
\begin{tikzcd}
	\U\text{-}\cat{Cat} \arrow[r,bend left,"F_*",""{name=A, below}] & \V\text{-}\cat{Cat} \arrow[l,bend left,"G_*",""{name=B,above}] \arrow[from=A, to=B, symbol=\dashv]
\end{tikzcd}.
\] 

Using $\W$ to denote either of the categories $\U$ or $\V$, we discuss three perspectives on $\W$, which we will need in order to prove the results of \textsection 3 in full detail. In cases where we need to distinguish between $\W$ viewed as self-enriched and as an unenriched category, we will use the notation $\W$ for the enriched category and $\W^u$ for the unenriched category. 

It is proved in \cite[3.4.9]{riehl-cht} that the monoidal closed structure of $\V^u$ canonically induces an isomorphism $\V_0 \cong \V^u$; thus, we may view $\V$ as a $\V$-category, as an unenriched category in its own right, or as an unenriched category $\V_0$. We will see next that the categories $\V_0$ and $(G_*\V)_0$ are also isomorphic.  

\begin{proposition} \label{prop:c0-gc0-iso} For a $\V$-category $\C$, the categories $\C_0$ and $(G_*\C)_0$ are isomorphic. 
\end{proposition}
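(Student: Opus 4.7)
The plan is to construct an explicit isomorphism $\Phi : (G_*\C)_0 \to \C_0$ that is the identity on objects, leveraging the monoidal adjunction \ref{main-guy}. By doctrinal adjunction the left adjoint $F$ is strong monoidal, with coherence iso $\phi_0 : F(*_\U) \xrightarrow{\sim} *_\V$ and $\phi_{XY} : F(X \otimes Y) \xrightarrow{\sim} FX \otimes FY$; because the adjunction is monoidal, its unit and counit are monoidal natural transformations, and so the lax coherences $(u, m)$ of $G$ arise as the mates of $(\phi_0^{-1}, \phi_{XY})$ under the adjunction. Writing $\widetilde{(-)}$ for adjunction transpose, this will give in particular the identity $\widetilde u = \phi_0$ and an analogous mate identity for $m$.

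Define $\Phi$ on hom-sets by sending $g : *_\U \to G(\C(x,y))$ to
\[
\Phi(g) \;:=\; \bigl( *_\V \xrightarrow{\phi_0^{-1}} F(*_\U) \xrightarrow{F(g)} FG(\C(x,y)) \xrightarrow{\epsilon_{\C(x,y)}} \C(x,y) \bigr),
\]
that is, the transpose $\widetilde g$ precomposed with $\phi_0^{-1}$. Each ingredient is a bijection on hom-sets, so $\Phi_{xy}$ is bijective. For preservation of identities, unpack $\id^{G_*\C}_x = G(\id^\C_x) \cdot u$ and use naturality of the transpose to rewrite $\Phi(\id^{G_*\C}_x) = \id^\C_x \circ \widetilde u \circ \phi_0^{-1}$; the mate identity $\widetilde u = \phi_0$ then collapses this to $\id^\C_x$. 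For preservation of composition, given $g_1 : *_\U \to G(\C(x,y))$ and $g_2 : *_\U \to G(\C(y,z))$, I would expand both sides of $\Phi(\circ^{G_*\C}(g_2, g_1)) = \circ^{\C_0}(\Phi(g_2), \Phi(g_1))$ using Definition \ref{underlying-category} and Definition \ref{cb-functor}, then reduce to a diagram chase combining the mate identity for $m$, naturality of $\epsilon$, functoriality of $F$, and the compatibility between $\phi_{XY}$ and the structural isos $*_\W \cong *_\W \otimes *_\W$.

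The hard part is purely organizational: the composition step is a moderately involved chase around a pentagon whose edges mix the strong coherence $\phi_{XY}$ of $F$, the counit $\epsilon$, the lax coherence $m$ of $G$, and the structural isos on both the $\U$ and the $\V$ side. Once the mate identities have been written down — they are an immediate consequence of monoidality of the unit and counit of \ref{main-guy} — every cell of the diagram commutes for the expected reason, and no new conceptual input is needed.
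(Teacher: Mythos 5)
Your proposal is correct and is essentially the paper's argument run in the opposite direction: the paper defines the bijection $\C_0(x,y)\to(G_*\C)_0(x,y)$ as the adjunction transpose along $*_\V\cong F(*_\U)$, observes that it simplifies to $f\mapsto G(f)\circ u$, and then verifies functoriality by a chase using the lax coherences $(u,m)$ of $G$, whereas you construct the inverse map via $\epsilon$, $F$, and $\phi_0^{-1}$ and carry out the equivalent chase using the strong coherences of $F$ together with the mate identities. Both routes rest on the same content (doctrinal adjunction plus naturality of the transpose), so no further comment is needed.
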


\begin{proof} Define $k_{x,y}$ to be the composite bijection $$ \hom_{\V^u}(*_\V, \C(x,y)) \to \hom_{\V^u}(F(*_\U), \C(x,y)) \to \hom_{\U^u}(*_\U, G\C(x,y)) $$ arising from the isomorphism $*_\V \cong F(*_\U)$ and the adjunction \ref{main-guy}. Note that by coherence for \ref{main-guy}, the action of $k$ on elements of $\hom_\V(*_\V, \C(x,y))$ is $ f \mapsto G(f) \circ u$. To see that $k$ preserves composites, note that coherence of $G$ ensures that each region of the diagram \[\begin{tikzcd}
	{*_\U} & {G*_\V} & {G(*_\V \otimes *_\V)} & {G(\C(y,z) \otimes \C(x,y))} & {G(\C(x,z))} \\
	{*_\U \otimes *_\U} && {G(*_\V) \otimes G(*_\V)} & {G(\C(y,z)) \otimes G(\C(x,y))}
	\arrow["u", from=1-1, to=1-2]
	\arrow["\sim", from=1-1, to=2-1]
	\arrow["\sim", from=1-2, to=1-3]
	\arrow["{G(f \otimes g)}", from=1-3, to=1-4]
	\arrow["{G\circ}", from=1-4, to=1-5]
	\arrow["{u \otimes u}", from=2-1, to=2-3]
	\arrow["m", from=2-3, to=1-3]
	\arrow["{Gf \otimes Gg}", from=2-3, to=2-4]
	\arrow["m", from=2-4, to=1-4]
	\arrow["\circ"'{pos=0.6}, from=2-4, to=1-5]
\end{tikzcd}\] commutes, which says exactly (recalling Definition \ref{underlying-category}) that ${k(f \cdot g) = k(f) \cdot k(g)}$. That $k$ preserves identities is easily shown using the unit coherence axiom for $G$.
\end{proof} 

For the sake of the discussion below, we use the notation \begin{equation} \label{eq:i-j-k}
    i : \V_0 \to \V^u, \; j: \U_0 \to \U^u, \text{ and } k: \V_0 \to (G_*\V)_0
\end{equation} for the isomorphisms canonically induced by the closed monoidal structure of $\V$ and $\U$ and (in the case of $k$) by monoidal coherence of $G$. To make our notation consistent, recalling Definition \ref{cb-functor}, observe that $$ (G_*\alpha)_x = k(\alpha_x) $$ for a given $\V$-natural transformation $\alpha : A \Rightarrow B$.

The monoidal adjunction \ref{main-guy} induces both an unenriched adjunction \begin{equation} \label{eq:induced-unenriched-adjunction}
    (i^{-1}Fj) : \U_0 \leftrightarrows \V_0 : (j^{-1}Gi)
\end{equation} and, via the argument in \cite[1.11]{kelly}, a $\U$-adjunction \begin{equation} \label{eq:induced-U-adjunction}
    F^\U : \U \leftrightarrows G_*\V : G^\U.
\end{equation} The action of the right adjoint $G^\U$ of this pair on hom-objects is defined by letting $G^\U_{xy} : G(\V(x,y)) \to \U(Gx,Gy)$ be the morphism in $\U^u$ corresponding by the Yoneda lemma to the map \[\begin{tikzcd}
	{\hom_{(G_*\V)_0}(x,y)} && {\hom_{\U_0}(Gx, Gy)}
	\arrow["{j^{-1}G_{xy}ik^{-1}}", from=1-1, to=1-3]
\end{tikzcd}.\] As such, we have $j^{-1}G_{xy}i = (G^\U_{xy})_* \circ k$.

Denoting the components of the associated natural isomorphism of hom-objects by $$ \Phi^\U : G\V(Fx,y) \longrightarrow \U(x,Gy),$$ we have $G^\U_{xy} = \Phi^\U_{(Gx)y} \circ k(\epsilon_x)^*$ by the triangle identities for \ref{eq:induced-U-adjunction}. This proves the following, which we state as a remark for future reference.

\begin{remark} \label{remark:G0-is-GU} For arbitrary $x,y \in \V$, we have an equality of functions $$ \Phi^\U_{(Gx)y} \circ k(\epsilon_x)^* \circ k = j^{-1}G_{xy} i : \hom_{\V_0}(x,y) \longrightarrow \hom_{\U_0}(Gx,Gy). $$
\end{remark}

\subsection{$\V$-limits} The enriched limits we encounter in this work are as simple as possible; namely, enriched limits weighted by a constant functor. We recall the definition for convenience.

\begin{definition}
    Let $* : \mathcal{D} \to \V$ be an ordinary functor constant at the monoidal unit $*_\V$ of $\V$, and let $F : \mathcal{D} \to \mathcal{C}$ be a $\V$-functor. The \textbf{conical limit} of $F$, if it exists, is an object $\lim^* F$ of $\C$ defined by the universal property $$ \C(m, \text{lim}^* F) \cong [\mathcal{D},\mathcal{V}](*,\C(m,F(-))). $$
\end{definition} 

In particular, we will often make use of cotensors, which we recall are a special case of conical limits (see \cite[7.4.3]{riehl-cht}). Finally, we make the following remark, to be used later in the work.

\begin{remark} \label{remark:conicallimitsareordinarylimits}
    In the setting of Hypothesis \ref{assumptions-V}, conical limits in $\V$ coincide with ordinary limits in $\V_0$, as observed in \cite[p. 50]{kelly}. 
\end{remark}

\subsection{Enriched coverages} Here, we recall \cite[Def. 1.2]{bq}, and lay out a few details for notational clarity in the enriched setting. In particular, to give the statement clearly, we will need to understand monomorphisms, subobjects, cotensors, and pullbacks in the category $[\C^\text{op},\V]$.

By a \textbf{monomorphism} $\eta: F \Rightarrow G$ between $\V$-functors $F,G : \C^\text{op} \to \V$, we mean a $\V$-natural transformation whose components are each monomorphisms in $\V_0$. For a $\V$-functor $K \in [\C^\text{op},\V]$, denote \begin{equation} \label{def:mono-lattice} \M_\V(K) := \{ \alpha \in \text{Mor}([\C^\text{op},\V]_0): \alpha \text{ monic and } \text{cod}(\alpha) = K\}.\end{equation} By a \textbf{subobject} of $F$, we mean an equivalence class in $ \M_\V(K)$ under the relation $$ (r : R \mono K) \sim (s : S \mono K) \; \iff \; r = st \text{ for some isomorphism } t.$$ We denote the set of such equivalence classes by \begin{equation} \label{def:sub-lattice} \text{Sub}_\V(K) := \M_\V(K) / \sim. \end{equation} Sieves in the $\V$-enriched setting are then defined exactly as in the unenriched case.

\begin{definition} \label{v-sieve}
    Let $\C$ be a $\V$-category, and let $x \in \C$ be an object. A \textbf{sieve} on $x \in \C$ is a subobject of $\C(-,x) \in [\C^\text{op},\V]$.
\end{definition}

The enriched functor category $[\C^\text{op},\V]$ has all small conical limits and colimits if $\V$ does, as explained in \cite[3.3]{kelly}. Thus $[\C^\text{op},\V]$ is cotensored over $\V$:

\begin{definition} \label{functor-cotensor} The \textbf{cotensor} $\{v,B\}$ of $B \in [\C^\text{op},\V]$ by $v \in \V$ is the $\V$-functor whose action on an object $x \in \C$ is $\V(v, Bx) \in \V$, together with $\V$-natural isomorphisms \[ [\C^\text{op},\V](A, \{v,B\}) \cong \V(v, [\C^\text{op},\V](A,B)).\]
\end{definition}

In the presence of the monoidal adjunction \ref{main-guy}, and assuming that $\C$ is cotensored over $\V$, change of base makes $G_*\C$ cotensored over $\U$ as follows:

\begin{remark} \label{cb-cotensor} Given a cotensored $\V$-category $\C$, $G_*\C$ is cotensored over $\U$ via $$ \{u, x\} := \{Fu, x\} $$ for objects $u \in \U$ and $x \in G_*\C$.
\end{remark}

Note that for any $v \in \V$, a monomorphism $R \mono \C(-,x)$ of $\V$-functors induces, by naturality of cotensoring, a monomorphism $$\{v , R\} \mono \{v, \C(-,x)\},$$ which we denote by $\iota$. Moreover, the enriched Yoneda lemma \cite[7.3.5]{riehl-cht} tells us that any $f : v \to \C(y,x)$ induces a map $v \to \cat{Nat}_\V(\C(-,y),\C(-,x))$, which in turn induces a $\V$-natural transformation $f: \C(-,y) \to \{v, \C(-,x)\}$. With this notation, we define the pullback of a sieve $R$ as follows:

\begin{definition} \label{pullback}
    The limit $R_f$ of the diagram $$ \begin{tikzcd}
	{\C(-,y)} && {\{v,\C(-,x)\}} && {\{v,R\}}
	\arrow["f", from=1-1, to=1-3]
	\arrow["\iota"', from=1-5, to=1-3]
\end{tikzcd} $$ in $[\C^\text{op},\V]$ is defined pointwise as the functor $\C^\text{op} \to \V$ whose value $R_f(z)$ at $z \in \C$ is the pullback of the diagram $$ \begin{tikzcd}
	{\C(z,y)} && {\V(v,\C(z,x))} && {\V(v,Rz)}
	\arrow["f_z", from=1-1, to=1-3]
	\arrow["\iota_z"', from=1-5, to=1-3]
\end{tikzcd} $$ in $\V$.
\end{definition}

Now we turn to the main definition of interest. We will often use a weakened form of the original definition \cite[1.2]{bq}, so our statement differs slightly from theirs. Recall that $\G_\V$ denotes a dense generating family of finitely presentable objects of $\V$. For $x,y \in \C$, we take the perspective that an element $f \in \hom_\V(\G_\V, \C(y,x))$ (i.e., a morphism $f : g \to \C(y,x)$ for some $g \in \G_\V$) is a generalized element of $\C(y,x)$.

\begin{definition} \label{v-gt} Given a small $\V$-category $\C$, let $J$ be an assignment to each object $x$ in $\C$ of a family $J(x)$ of sieves on $x$. The assignment $J$ may satisfy
	\begin{itemize}
		\item[(T1)] $\C(-,x) \in J(x)$ for each object $x$;
		
		\item[(T2)] for any $y \in \C$, any $R \in J(x)$, and any $f \in \hom_\V(\G_\V, \C(y,x))$, we have $R_f \in J(y)$, where $R_f$ is as in Definition \ref{pullback};
		
		\item[(T3)] if $R \mono \C(-,x)$ is an arbitrary sieve for which there exists $S \in J(x)$ such that for all objects $y$ of $\C$, $$R_f \in J(y) \text{ for any } f \in \hom_\V(\G_\V,S(y)),$$ then we have $R \in J(x)$.
	\end{itemize} We say that $J$ is a \textbf{$\V$-coverage} if it satisfies (T1) and (T2), and that $J$ is a \textbf{$\V$-Grothendieck topology}, or simply a \textbf{$\V$-topology}, if it satisfies (T1)-(T3).
\end{definition}

For clarity, we emphasize here that, since the collections $J(x)$ are in fact families of isomorphism classes of functors, if $R \in J(x)$ and $(s : S \mono \C(-,x))$ is $\V$-naturally isomorphic to $R$, then $S \in J(x)$. 

\subsection{Examples of $\V$-coverages} Finally, we give a few examples of enriched coverages and topologies to motivate the rest of the work.

Given a not-necessarily commutative ring $A$ viewed as a one-object $\cat{Ab}$-category, the standard example of \ref{v-gt}.ii comes from ring theory, where it is called a Gabriel topology \cite[VI.5]{Stenstrom_1975} on $A$. As an example of an enriched topology, it is noted both in \cite{bq} and by Lowen in \cite[2.4]{Lowen_2004}. 

\begin{example} \label{example:ring-gabriel} \textbf{(Gabriel topologies on a ring)} Let $A$ be a ring and let $\mathfrak{R}$ be a non-empty set of right ideals of $A$. The family $\mathfrak{R}$ is a Gabriel topology on $A$ if
    \begin{enumerate}
        \item[(R1)] $I \in \mathfrak{R}$ and $I \subset J$ implies $J \in \mathfrak{R}$;

        \item[(R2)] if $I \in \mathfrak{R}$ and $x \in A$, then $$(I : x) := \{ r \in A : xr \in I\} \in \mathfrak{R};$$

        \item[(R3)] if $I$ is a right ideal and there exists $J \in \mathfrak{R}$ such that $(I:x) \in \mathfrak{R}$ for every $x \in J$, then $I \in \mathfrak{R}$.
    \end{enumerate}

\end{example}

In Section 6 (Proposition \ref{v-topology-v-gabriel}), we define Gabriel topologies for a general $\V$, and give a detailed proof that a $\V$-topology on a one-object $\V$-category is the same thing as a $\V$-Gabriel topology, subsuming Example \ref{example:ring-gabriel}.

In the case where $\C$ is a preordered set---that is, a category enriched over the monoidal preorder $\V = \{0,1\}$---a sieve on $p \in \C$ is exactly a downward-closed subset of $\downarrow p$, and the pullback of a sieve $S$ on $p$ along a morphism $q \leq p$ is exactly $S \cap \downarrow q$ (note that this set is again a sieve on $q$). We obtain the following example:

\begin{example} \textbf{($\{0,1\}$-topologies on a preorder)} \label{example:preorder-gt} A $\{0,1\}$-Grothendieck topology $J$ on $\C$ is, to each $p \in \C$, a collection $J(p)$ of downward-closed subsets of $\downarrow p$ satisfying
    \begin{enumerate}
        \item[(P1)] the maximal sieve $\downarrow p$ is in $J(p)$;
        \item[(P2)] if $S \in J(p)$ and $q \leq p$, then $S \cap \downarrow q \in J(q)$;
        \item[(P3)] if $S \in J(p)$ and $R$ is a sieve on $p$ such that $R \cap \downarrow q \in J(q)$ for all $q \in S$, then $R \in J(p)$.
    \end{enumerate}
\end{example}

In the case where $\V$ is the monoidal preorder $([0,\infty], \geq, + , 0)$, $\V$-categories are the generalized metric spaces of \cite{Lawvere_1973}. Note that $[0,\infty]$ is not locally finitely presentable as a category; however, Definitions \ref{v-sieve} and \ref{v-gt} do not require the enriching category to be locally finitely presentable. To make sense of them, we could merely have asked that $\V$ admit some strongly generating set of objects---in $[0,\infty]$, we can take $\G_\V = \mathbb{Q} \cap [0,\infty]$. A $\V$-coverage on a generalized metric space is then as follows.

\begin{example} \textbf{($[0,\infty]$-coverages on a Lawvere metric space)} \label{example:lawvere-coverage} As described in \cite{Lawvere_1973}, a $[0,\infty]$-functor is a (1-)Lipschitz function between generalized metric spaces. Let $\mathcal{L}$ be a $\V$-category. Recall that $\V(x,y) := \max\{0, y-x\}$, and that $\V$ is generated under filtered colimits by $\mathbb{Q} \cap [0,\infty]$. To emphasize that we are viewing $\mathcal{L}$ as a generalized metric space, we denote $ d(x,y) := \mathcal{L}(x,y)$. 

A $\V$-sieve on $x \in \mathcal{L}$ is a 1-Lipschitz function $r : \mathcal{L} \to [0,\infty]$ which satisfies (for all $y,z \in \mathcal{L}$)
\begin{enumerate}
    \item[(i)] $rz \geq d(z,x)$;
    \item[(ii)] $d(y,z) \geq \U(rz, d(y,x))$,
\end{enumerate} where conditions (i) and (ii) arise from $\V$-naturality of $r \mono d(-,x)$. A $\V$-coverage on $\mathcal{L}$ is, for each $x \in \mathcal{L}$, a collection $J(x)$ of sieves $r : \mathcal{L} \to [0,\infty]$ such that 
\begin{enumerate}
    \item[(iii)] the function $d(-,x) \in J(x)$;
    \item[(iv)] for any nonnegative rational number $q \geq \V(x,y)$ and $r \in J(x)$, the function $r_q$ defined by $$ r_q(z) := \max\{rz, \V(q,d(z,y))\}$$ is in $J(y)$.
\end{enumerate}
\end{example}

\section{Sieves under change of base} In this section, we prove several technical results that will allow us to relate collections of $\V$-enriched sieves with collections of $\U$-enriched sieves via base change. We build upon these results in later sections to prove a correspondence theorem for coverages. 

Establishing notation to be used for the rest of the paper, we consider categories $\U$ and $\V$ satisfying the hypotheses in \ref{assumptions-V}. We denote the unit objects in $\U, \V$ by $*_\U, *_\V$, the monoidal operation on both categories by $\otimes$, and fix generating families of finitely presentable objects $\G_\U$, $\G_\V$. 

We refer to a fixed lax monoidal functor $G : \V \to \U$, whose coherence morphisms we denote by $$ u : *_\U \to G(*_\V), \quad m_{ab} : G(a) \otimes G(b) \to G(a \otimes b).$$ Moreover, we take $G$ to be the right adjoint of the pair \ref{main-guy}, whose unit and counit we denote respectively by $\eta : \mathbbm{1}_\U \to GF$ and $\epsilon : FG \to \mathbbm{1}_\V$. For an enriched category $\X$ (over either $\U$ or $\V$), we will continue to denote composition in $\X_0$, as defined in \ref{underlying-category}, by $\cdot$ .

Before anything else, we need to establish that change of base via $G_*$ both preserves and reflects enriched natural families. The following proposition generalizes the observation made in \cite[1.3]{kelly} that if $\hom_\V(*_\V,-)$ is faithful, then $\V$-naturality of a family $\{\alpha_x : *_\V \to \C(Ax,Bx)\}$ is equivalent to ordinary naturality of ${\{(\alpha_x)_0 : A_0x \to B_0x\}}$.

\begin{proposition} \label{prop:G*-naturality} 
Let $\U$ and $\V$ be as above, and suppose $G$ is faithful. For $\V$-functors $A,B : \mathcal{C} \to \mathcal{D}$, the family $$\alpha := \{\alpha_x: *_\V \to \D(Ax,Bx)\}$$ is $\V$-natural if and only if the family $G_*\alpha$ is $\U$-natural.\end{proposition}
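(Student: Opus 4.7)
The plan is to show that the $\U$-naturality square for $G_*\alpha$ is obtained arrow-by-arrow as the image under $G$ of the $\V$-naturality square for $\alpha$; once this is established, the forward direction of the equivalence is functoriality of $G$, and the reverse direction is faithfulness.

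Recall from Definition \ref{def:pre-post-composition} that $\V$-naturality of $\alpha$ asserts, for every pair $x,y \in \C$, the equality $(\alpha_x)^* \circ B_{xy} = (\alpha_y)_* \circ A_{xy}$ as morphisms $\C(x,y) \to \D(Ax,By)$ in $\V^u$; and $\U$-naturality of $G_*\alpha$ is the analogous equation $((G_*\alpha)_x)^* \circ GB_{xy} = ((G_*\alpha)_y)_* \circ GA_{xy}$ in $\U^u$, where the pre- and post-composition maps are computed using the composition $\circ^{G_*\D} = G(\circ^\D) \cdot m$ in $G_*\D$ and the components $(G_*\alpha)_x = G(\alpha_x) \cdot u$. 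The bulk of the argument reduces to verifying the two identities
\[ ((G_*\alpha)_x)^* = G((\alpha_x)^*) \quad \text{and} \quad ((G_*\alpha)_y)_* = G((\alpha_y)_*) \]
of morphisms in $\U^u$.

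For the first identity, I would unfold $((G_*\alpha)_x)^*$ by Definition \ref{def:pre-post-composition}: it is the composite of the right-unit isomorphism $G(\D(Bx,By)) \cong G(\D(Bx,By)) \otimes *_\U$, the map $1 \otimes u$, the map $1 \otimes G(\alpha_x)$, the coherence $m$, and finally $G(\circ^\D)$. Naturality of $m$ commutes $m$ past $1 \otimes G(\alpha_x)$, producing $G(1 \otimes \alpha_x) \circ m$; unit coherence for the lax monoidal structure $(G,u,m)$ then identifies the remaining initial segment $m \circ (1 \otimes u)$ (after the right-unit isomorphism in $\U$) with $G$ applied to the right-unit isomorphism of $\D(Bx,By)$ in $\V$. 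What is left is visibly $G$ applied to the definition of $(\alpha_x)^*$, proving the first identity; the second is symmetric, using left unit coherence.

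With both identities in hand, the $\U$-naturality equation for $G_*\alpha$ reads $G((\alpha_x)^* \circ B_{xy}) = G((\alpha_y)_* \circ A_{xy})$ by functoriality of $G$, which by the faithfulness hypothesis is equivalent to $(\alpha_x)^* \circ B_{xy} = (\alpha_y)_* \circ A_{xy}$, i.e., $\V$-naturality of $\alpha$. The main obstacle will be the diagram chase of the third paragraph, in which naturality of $m$ and unit coherence for $G$ must be invoked in just the right sequence to absorb the inserted $u$ back into $G$; no creative input is needed beyond careful bookkeeping of the coherence morphisms.
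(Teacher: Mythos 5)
Your proposal is correct and follows essentially the same route as the paper: the same diagram chase (naturality of $m$ to move it past $\id \otimes G(\alpha_x)$, then unit coherence of $G$ to absorb $u$ into $G$ of a unitor) followed by faithfulness of $G$. The only cosmetic difference is that you package the chase as the two identities $((G_*\alpha)_x)^* = G((\alpha_x)^*)$ and $((G_*\alpha)_y)_* = G((\alpha_y)_*)$, which lets both directions fall out of one computation, whereas the paper cites Cruttwell for the forward implication and runs the chase only for the converse.
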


\begin{proof} Denote the left and right unitors in a monoidal category $\X$ by $\lambda_\X, \rho_\X$. If $\alpha$ is $\V$-natural, $\U$-naturality of $G_*\alpha$ follows from \cite[4.1.1]{cruttwell}. Conversely, suppose $G_*\alpha$ is $\U$-natural, so that \[\begin{tikzcd}
	& {G\C(x,y)} \\
	{G\D(Bx,By)} && {G\D(Ax,Ay)} \\
	{G\D(Bx,By) \otimes G(*_\V)} && {G(*_\V) \otimes G\D(Ax,Ay)} \\
	{G(\D(Bx,By) \otimes \D(Ax,Bx))} && {G(\D(Ay,By) \otimes \D(Ax,Ay))} \\
	& {G\D(Ax,By)}
	\arrow["{GB_{xy}}"', from=1-2, to=2-1]
	\arrow["{GA_{xy}}", from=1-2, to=2-3]
	\arrow["{(\id \otimes u) \cdot \rho_\U^{-1}}"', from=2-1, to=3-1]
	\arrow[""{name=0, anchor=center, inner sep=0}, "{(G_*\alpha)_x^*}"{description}, curve={height=-30pt}, from=2-1, to=5-2]
	\arrow["{(u \otimes \id) \cdot \lambda_\U^{-1}}", from=2-3, to=3-3]
	\arrow[""{name=1, anchor=center, inner sep=0}, "{[(G_*\alpha)_y]_*}"{description}, curve={height=30pt}, from=2-3, to=5-2]
	\arrow["{m \cdot (\id \otimes G\alpha_x)}"', from=3-1, to=4-1]
	\arrow["{m \cdot (G\alpha_y \otimes \id)}", from=3-3, to=4-3]
	\arrow["G\circ"', from=4-1, to=5-2]
	\arrow["G\circ", from=4-3, to=5-2]
	\arrow[shorten <=16pt, shorten >=21pt, Rightarrow, no head, from=0, to=4-1]
	\arrow[shorten <=16pt, shorten >=21pt, Rightarrow, no head, from=1, to=4-3]
\end{tikzcd}\] commutes. Suppressing subscripts, naturality of $m$ implies that $$ m \cdot (G\alpha \otimes \text{id}) = G(\alpha \otimes \text{id}) \cdot m, $$ so the above diagram becomes \[\begin{tikzcd}
	& {G\C(x,y)} \\
	{G\D(Bx,By)} && {G\D(Ax,Ay)} \\
	{G(\D(Bx,By) \otimes *_\V)} && {G(*_\V \otimes \D(Ax,Ay)} \\
	& {G\D(Ax,By)}
	\arrow["{GB_{xy}}"', from=1-2, to=2-1]
	\arrow["{GA_{xy}}", from=1-2, to=2-3]
	\arrow["{m \cdot (\id \otimes u) \cdot \rho_\U^{-1}}"', from=2-1, to=3-1]
	\arrow[""{name=0, anchor=center, inner sep=0}, "{(G_*\alpha)_x^*}"{description}, curve={height=-24pt}, from=2-1, to=4-2]
	\arrow["{m \cdot (u \otimes \id) \cdot \lambda_\U^{-1}}", from=2-3, to=3-3]
	\arrow[""{name=1, anchor=center, inner sep=0}, "{[(G_*\alpha)_y]_*}"{description}, curve={height=24pt}, from=2-3, to=4-2]
	\arrow["{G(\circ \cdot(\id \otimes \alpha_x))}"{description}, from=3-1, to=4-2]
	\arrow["{G(\circ \cdot(\alpha_y \otimes \id))}"{description}, from=3-3, to=4-2]
	\arrow[shorten <=14pt, Rightarrow, no head, from=0, to=3-1]
	\arrow[shorten <=14pt, Rightarrow, no head, from=1, to=3-3]
\end{tikzcd}.\] Finally, coherence of the monoidal functor $G$ means that we have $$ m \cdot(u \otimes \text{id}) \cdot \lambda_\U^{-1} = G\lambda_\V^{-1} \quad \text{and} \quad m \cdot (\text{id} \otimes u) \cdot \rho_\U^{-1} = G\rho_\V^{-1}, $$ so in fact both of the composites $$[(G_*\alpha)_x]_* \cdot GA_{xy} \qquad \text{ and } \qquad (G_*\alpha)_y^* \cdot GB_{xy} $$ are of the form $G(f)$ for some morphism $f$. We can therefore apply faithfulness of $G$, obtaining a commuting diagram \[\begin{tikzcd}
	{\C(x,y)} && {\D(Ax,Ay)} \\
	{\D(Bx,By)} && {*_\V \otimes \D(Ax,Ay)} \\
	{\D(Bx,By) \otimes *_\V} && {\D(Ax,By)}
	\arrow["{A_{xy}}", from=1-1, to=1-3]
	\arrow["{\lambda_\V^{-1}}", from=1-3, to=2-3]
	\arrow["{\rho_\V^{-1}}"', from=2-1, to=3-1]
	\arrow["{B_{xy}}"', from=1-1, to=2-1]
	\arrow["{\circ \cdot (\alpha_y \otimes \text{id})}", from=2-3, to=3-3]
	\arrow["{\circ \cdot (\text{id} \otimes \alpha_x)}"', from=3-1, to=3-3]
\end{tikzcd},\] which says exactly that $\alpha$ is $\V$-natural.
\end{proof}

Recalling the isomorphism $\D_0 \cong (G_*\D)_0$ of \ref{prop:c0-gc0-iso}, we have the following corollary, for later use:

\begin{corollary} \label{cor:G*-fullyfaithful-conservative} The 2-functor $G_*$ is locally fully faithful.
\end{corollary}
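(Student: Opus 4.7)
The plan is to unwind the definitions of the local hom-sets in $\vcat$ and $\ucat$ and assemble the two preceding results. Fix $\V$-categories $\C, \D$ and a pair of $\V$-functors $A, B : \C \to \D$. I need to show that the function
\[ G_* : \{\text{$\V$-natural $\alpha : A \Rightarrow B$}\} \longrightarrow \{\text{$\U$-natural $\gamma : G_*A \Rightarrow G_*B$}\} \]
is a bijection. By Definition \ref{cb-functor}(iii), $G_*\alpha$ is the family $\{G(\alpha_x) \cdot u\}$, which is precisely $\{k(\alpha_x)\}$ in the notation of \ref{eq:i-j-k}. So on components, $G_*$ acts by post-composition with the componentwise bijections
\[ k : \hom_{\V_0}(*_\V, \D(Ax,Bx)) \xrightarrow{\;\sim\;} \hom_{\U_0}(*_\U, G\D(Ax,Bx)) \]
furnished by Proposition \ref{prop:c0-gc0-iso}.

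Faithfulness is then immediate: if $G_*\alpha = G_*\beta$ as families, then $k(\alpha_x) = k(\beta_x)$ for each object $x$, and injectivity of $k$ forces $\alpha_x = \beta_x$. For fullness, suppose $\gamma = \{\gamma_x : *_\U \to G\D(Ax,Bx)\}$ is a $\U$-natural transformation $G_*A \Rightarrow G_*B$. Using surjectivity of $k$, define $\alpha_x := k^{-1}(\gamma_x)$ for each $x$, so that $G_*\alpha = \gamma$ as a family. It remains only to verify that $\alpha$ is $\V$-natural, and this is exactly the content of Proposition \ref{prop:G*-naturality}: $G_*\alpha = \gamma$ is $\U$-natural by hypothesis, so (using faithfulness of $G$) $\alpha$ itself is $\V$-natural.

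There is no real obstacle; the work is essentially done by the previous proposition. The only point requiring care is to identify the components $(G_*\alpha)_x$ with $k(\alpha_x)$ so that the pointwise bijections from Proposition \ref{prop:c0-gc0-iso} can be invoked directly, and this identification was recorded in the discussion just after \ref{eq:i-j-k}.
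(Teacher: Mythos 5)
Your proof is correct and follows essentially the same route as the paper, which likewise derives the corollary from the componentwise bijection $k$ of Proposition \ref{prop:c0-gc0-iso} together with the naturality equivalence of Proposition \ref{prop:G*-naturality}. The paper leaves these details implicit; your write-up simply makes them explicit.
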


Our next purpose in this section will be to develop a method for taking a $\V$-presheaf and turning it into a $\U$-presheaf. Note that to start with a $\V$-functor $R : \C^\text{op} \to \V$ and obtain a $\U$-functor taking values in $\U$, it is not sufficient to change the base of $R$ using $G_*$, since in doing so, we obtain a $\U$-functor $G_*R$ taking values in $G_*\V$. If we want a $\U$-functor which takes values in $\U$, we need to do a little more. 

\begin{definition} \textbf{(Change of base for presheaves.)} \label{bcg-bcf} Let $\bcg$ be the unenriched functor defined as the composite \[\begin{tikzcd}
	{[\C^\text{op},\V]} && {[G_*\C^\text{op},G_*\V]} && {[G_*\C^\text{op},\U]}
	\arrow["{G_*}", from=1-1, to=1-3]
	\arrow["{G^\U \circ -}", from=1-3, to=1-5]
    \end{tikzcd},\] whose effect on objects is $$ R \; \longmapsto \; G^\U \circ (G_* R), $$ where $G^\U$ is as defined in \ref{eq:induced-U-adjunction}. Denoting the function of Remark \ref{remark:G0-is-GU} by $\bcg_{xy}$, we define the action on morphisms by $$ \{\alpha_x : *_\V \to \V(Ax,Bx)\} \longmapsto \{\bcg_{Ax,Bx}(\alpha_x) : *_\U \to \U(GAx, GBx)\}.$$
\end{definition}

To show that the effect of $\bcg$ on morphisms $\alpha$ is well-defined, we need the following lemma, together with Proposition \ref{prop:G*-naturality}.

\begin{lemma} \label{lemma:bcg-naturality}
    Suppose $G$ is the right adjoint of the pair \ref{main-guy}. For $\V$-presheaves ${A,B : \mathcal{C}^\text{op} \to \mathcal{V}}$, a family $$ \iota := \{ \iota_x : *_\U \to G\V(Ax,Bx) \}$$ is $\U$-natural if and only if the family $$ \bcg \iota := \{ G^\U_{Ax,Bx} \circ \iota_x : *_\U \to \U(GAx, GBx) \} $$ is $\U$-natural.
\end{lemma}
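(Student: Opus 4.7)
The plan is to verify both directions by unpacking the $\U$-naturality squares for $\iota$ and $\bcg\iota$ and relating them via the $\U$-functoriality of $G^\U : G_*\V \to \U$. The key computational input is a pair of identities that follow from $G^\U$'s preservation of composition on hom-objects together with naturality of the unitors: for any $f : *_\U \to G_*\V(a,b)$ and any $c$, one has $G^\U_{cb} \circ f_*^{G_*\V} = (G^\U f)_*^{\U} \circ G^\U_{ca}$ and the analogous formula with $(-)^*$ in place of $(-)_*$.

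For the forward direction I would assume $\iota$ is $\U$-natural, post-compose both sides of its naturality square with $G^\U_{Ax,By}$, and apply the identities above together with the factorization $\bcg A_{xy} = G^\U_{Ax, Ay} \circ GA_{xy}$ (and analogously for $B$); this recovers exactly the naturality square for $\bcg\iota$. Conceptually, this is just the standard fact that whiskering a $\U$-natural transformation by a $\U$-functor yields a $\U$-natural transformation.

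For the reverse direction I would run the same manipulation in the opposite direction: starting from the naturality square for $\bcg\iota$, the identities rewrite it in the form $G^\U_{Ax,By} \circ (\iota_y)_*^{G_*\V} \circ GA_{xy} = G^\U_{Ax,By} \circ (\iota_x)^{*\,G_*\V} \circ GB_{xy}$, and $\U$-naturality of $\iota$ then follows upon cancelling the common left factor $G^\U_{Ax,By}$.

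The main obstacle is justifying this cancellation, which amounts to showing $G^\U_{Ax,By}$ is a monomorphism in $\U$. By Yoneda applied to the strongly generating family $\G_\U$, this reduces to injectivity of $\hom_\U(g, G\V(Ax,By)) \to \hom_\U(g, \U(GAx,GBy))$ for each $g \in \G_\U$; transposing under the $F \dashv G$ adjunction and using that $F$ is strong monoidal while $\V$ is closed (so that tensoring preserves epimorphisms), this reduces in turn to $\epsilon_{Ax}$ being epic in $\V$, the standard characterization of $G$ being faithful. I therefore expect the reverse direction to tacitly invoke faithfulness of $G$ — the same hypothesis that powers Proposition \ref{prop:G*-naturality} — even though it is not restated in the lemma's hypotheses.
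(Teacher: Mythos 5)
Your argument is correct and follows essentially the same route as the paper's proof, which factors $G^\U_{xy}$ as $\Phi^\U_{(Gx)y} \circ k(\epsilon_x)^*$ (Remark \ref{remark:G0-is-GU}) and compares the naturality square for $\iota$ with that for $G_*(\epsilon^*) \cdot \iota$ inside a $3 \times 3$ diagram whose remaining squares commute by naturality of $\epsilon$ and by associativity of composition; your two ``whiskering'' identities for $G^\U$ are precisely the commutativity of those auxiliary squares. The one substantive difference is that you make explicit a step the paper elides. The paper asserts that commutativity of the outer square is \emph{equivalent} to commutativity of the upper-left square, but pasting only yields one implication: to recover the upper-left square from the outer one, the common left factor $G(\epsilon_{Ax}^*)$ (equivalently your $G^\U_{Ax,By}$, since $\Phi^\U$ is invertible) must be cancelled, i.e.\ shown to be a monomorphism. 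Your justification of this---test against $g \in \G_\U$, transpose along $F \dashv G$, use that $Fg \otimes (-)$ preserves epimorphisms because it is a left adjoint, and reduce to $\epsilon_{Ax}$ being epic---is sound, and that $\epsilon$ is a pointwise epimorphism is indeed equivalent to faithfulness of $G$. Your closing remark is therefore accurate: the direction that reflects $\U$-naturality from $\bcg\iota$ back to $\iota$ (the one actually invoked in Corollary \ref{cor:bcg-natural-iff-v-natural} and Proposition \ref{prop:bcg-orom}) tacitly uses faithfulness of $G$, which is in force throughout the section but not restated in the lemma's hypotheses, and which the paper's own proof also uses silently at the cancellation step.
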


\begin{proof}
    For visual simplicity, we omit alphanumeric subscripts. Naturality of the counit $\epsilon$ for $F \dashv G$ implies that the top-right square in the diagram \[\begin{tikzcd}
	{G(\C(x,y))} & {G(\V(Bx,By))} & {G(\V(FGBx,By))} \\
	{G(\V(Ax,Ay))} & {G(\V(Ax,By))} & {G(\V(FGAx,By))} \\
	{G(\V(FGAx,Ay))} & {G(\V(FGAx,By))} & {G(\V(FGAx,By))}
	\arrow["{G_*B}", from=1-1, to=1-2]
	\arrow["{G_*A}"', from=1-1, to=2-1]
	\arrow["{G_*(\epsilon^*)}", from=1-2, to=1-3]
	\arrow["\iota"', from=1-2, to=2-2]
	\arrow["\iota"', from=1-3, to=2-3]
	\arrow["\iota", from=2-1, to=2-2]
	\arrow["{G_*(\epsilon^*)}"', from=2-1, to=3-1]
	\arrow["{G_*(\epsilon^*)}", from=2-2, to=2-3]
	\arrow["{G_*(\epsilon^*)}"', from=2-2, to=3-2]
	\arrow[shift right, no head, from=2-3, to=3-3]
	\arrow[no head, from=2-3, to=3-3]
	\arrow["\iota", from=3-1, to=3-2]
	\arrow[shift right, no head, from=3-2, to=3-3]
	\arrow[no head, from=3-2, to=3-3]
\end{tikzcd}\] commutes for any $x,y$, while commutativity of the bottom-left square follows from associativity of composition in $\V$. Thus commutativity of the outer square, expressing $\U$-naturality of $G_*(\epsilon^*) \cdot \iota$, is equivalent to commutativity of the upper-left square, expressing $\U$-naturality of $\iota$. Postcomposing each instance of $G_*(\epsilon^*)$ above with the appropriate component of $\Phi$ yields squares of the form $(\Phi \cdot \iota \cdot \Phi^{-1}) \cdot \Phi = \Phi \cdot \iota$, so commutativity of the diagram above is sufficient.
\end{proof}

Together with Proposition \ref{prop:G*-naturality} (and recalling the notation $k$ from \ref{eq:i-j-k}), this proves:

\begin{corollary} \label{cor:bcg-natural-iff-v-natural}
    For $\V$-presheaves ${A,B : \mathcal{C}^\text{op} \to \mathcal{V}}$, a family $$ \iota := \{ \iota_x : *_\V \to \V(Ax,Bx) \}$$ is $\V$-natural if and only if the family $$ \bcg \iota := \{ G^\U_{Ax,Bx} \circ k(\iota_x) : *_\U \to \U(GAx, GBx) \} $$ is $\U$-natural.
\end{corollary}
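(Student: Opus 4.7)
The plan is to chain the two equivalences just established into a single biconditional. Starting from a family $\iota = \{\iota_x : *_\V \to \V(Ax,Bx)\}$, I would first invoke Proposition \ref{prop:G*-naturality} applied to the $\V$-functors $A, B : \C^{\text{op}} \to \V$. This gives that $\iota$ is $\V$-natural if and only if $G_*\iota$ is $\U$-natural. By the observation made just after \ref{eq:i-j-k}, the components of $G_*\iota$ are exactly $(G_*\iota)_x = k(\iota_x)$, viewed as morphisms $*_\U \to G\V(Ax,Bx)$ in $\U$. So after this first step, $\V$-naturality of $\iota$ is equivalent to $\U$-naturality of the family $\{k(\iota_x) : *_\U \to G\V(Ax,Bx)\}$.

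Next, I would apply Lemma \ref{lemma:bcg-naturality} with this family $\{k(\iota_x)\}$ playing the role of the lemma's $\iota$; note that the codomains $G\V(Ax,Bx)$ match the hypotheses of the lemma exactly. The lemma tells us that $\U$-naturality of $\{k(\iota_x)\}$ is equivalent to $\U$-naturality of the postcomposed family $\{G^\U_{Ax,Bx} \circ k(\iota_x) : *_\U \to \U(GAx, GBx)\}$, which is precisely $\bcg\iota$ by Definition \ref{bcg-bcf}. Concatenating the two biconditionals yields the corollary.

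The whole argument is thus a two-line composition of the previously established equivalences; there is no real obstacle beyond bookkeeping, namely checking that the codomain of $k(\iota_x)$ is $G\V(Ax,Bx)$ so that Lemma \ref{lemma:bcg-naturality} applies verbatim, and that $G^\U_{Ax,Bx} \circ k(\iota_x)$ is the prescribed formula for the components of $\bcg\iota$. Both of these reduce to unwinding Definition \ref{bcg-bcf} and Remark \ref{remark:G0-is-GU}. So the write-up should be short: a one-paragraph proof presenting the chain ``$\iota$ is $\V$-natural $\Leftrightarrow$ $\{k(\iota_x)\}$ is $\U$-natural (by Proposition \ref{prop:G*-naturality}) $\Leftrightarrow$ $\bcg\iota$ is $\U$-natural (by Lemma \ref{lemma:bcg-naturality})''.
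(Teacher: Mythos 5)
Your proposal is correct and is exactly the argument the paper intends: the corollary is obtained by chaining Proposition \ref{prop:G*-naturality} (identifying the components of $G_*\iota$ with $k(\iota_x)$) with Lemma \ref{lemma:bcg-naturality}, which is precisely what the paper's preceding sentence asserts. No differences to report.
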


Our ultimate purpose in this section is to prove that the functor $\bcg$ defined above induces an injective map on posets of subobjects. To that end, we should hope that $\bcg$ preserves and reflects monomorphisms. In fact, we will prove that it preserves and reflects conical limits in the presheaf $\V$-category $[\C^\text{op},\V]$, for which we need a lemma.

We extend the results above to show that the change of base construction $\bcg$ for presheaves interacts nicely with conical limits, and thus in particular with monomorphisms, in the presheaf categories at hand.

\begin{proposition} \label{prop:bcg-limits}
    Suppose $G$ is faithful and conservative. The functor $$\bcg : [\C^\text{op},\V] \longrightarrow [G_*\C^\text{op}, \U]$$ is faithful and conservative, and both preserves and reflects conical limits.
\end{proposition}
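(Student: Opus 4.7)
The plan is to reduce the four assertions to the corresponding properties of $G$, by showing that $\bcg$ acts pointwise (at each object $x \in \C$) essentially as $G$. More precisely, for each $x \in \C$ I would first establish that the square
\[\begin{tikzcd}
	{[\C^\text{op},\V]_0} & {[G_*\C^\text{op},\U]_0} \\
	{\V_0} & {\U_0}
	\arrow["{\bcg}", from=1-1, to=1-2]
	\arrow["{\text{ev}_x}"', from=1-1, to=2-1]
	\arrow["{\text{ev}_x}", from=1-2, to=2-2]
	\arrow["{j^{-1}Gi}"', from=2-1, to=2-2]
\end{tikzcd}\]
commutes. On objects this is immediate from Definition \ref{bcg-bcf}, since $G^\U$ acts as $G$ on the object level; on components of natural transformations, it is precisely the content of Remark \ref{remark:G0-is-GU}.

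Next I would collect three standard facts. First, by Remark \ref{remark:conicallimitsareordinarylimits} applied to $\V$, $\U$, and to the two presheaf categories (each of which inherits completeness from $\V$ or $\U$), conical limits coincide with ordinary limits on underlying categories. Second, the evaluation functors $\text{ev}_x$ jointly reflect isomorphisms and jointly preserve and reflect ordinary limits, since both are computed pointwise in presheaf categories. Third, viewed as a functor on underlying categories, $G$ is faithful and conservative by hypothesis and is a right adjoint via \eqref{eq:induced-unenriched-adjunction}, so it preserves small limits; since $\V_0$ is complete and $G$ is continuous and conservative, it also reflects limits by the standard argument of lifting the comparison isomorphism through conservativity.

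With these pieces in place, each conclusion follows by the same pointwise pattern. For faithfulness, if $\bcg\alpha = \bcg\beta$, then evaluating componentwise and using the commuting square yields $G(i\alpha_x) = G(i\beta_x)$ for every $x$, so $\alpha_x = \beta_x$ by faithfulness of $G$ and invertibility of $i$, and thus $\alpha = \beta$ by joint reflection via the $\text{ev}_x$'s. Conservativity runs identically with ``isomorphism'' replacing equality. For preservation of conical limits, a conical limit cone in $[\C^\text{op},\V]$ is pointwise a limit cone in $\V_0$; evaluating $\bcg$ of it at any $x$ produces $G$ applied to that limit cone, which remains a limit cone by continuity of $G$; joint preservation via the $\text{ev}_x$'s then upgrades this to a conical limit in $[G_*\C^\text{op},\U]$. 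Reflection of conical limits is dual, using that $G$ reflects limits together with joint reflection by the evaluation functors.

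I do not anticipate any serious obstacle. The one nontrivial piece of bookkeeping is verifying the pointwise commuting square rigorously, which amounts to unraveling the isomorphisms $i,j,k$ of \eqref{eq:i-j-k} together with Remark \ref{remark:G0-is-GU}. Beyond that, the argument is a routine synthesis of well-known facts about pointwise computation of limits and isomorphisms in functor categories with the given faithfulness, conservativity, and adjoint status of $G$.
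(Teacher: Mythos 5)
Your proposal is correct. It shares the paper's central mechanism --- the identification, via Remark \ref{remark:G0-is-GU}, of the component $(\bcg\alpha)_x$ with $j^{-1}G_{Ax,Bx}\,i(\alpha_x)$ --- and your faithfulness and conservativity arguments are essentially identical to the paper's: evaluate componentwise and invoke faithfulness, respectively conservativity, of $i$, $G$, and $j^{-1}$. Where you diverge is in the treatment of limits. The paper establishes preservation globally: $G^\U$ is a right $\U$-adjoint, hence so is $G^\U \circ (-)$, and together with Corollary \ref{cor:G*-fullyfaithful-conservative} this shows that $\bcg = G^\U \circ G_*(-)$ preserves unenriched limits; reflection then follows in one line from the general fact that a conservative functor reflects any limits it preserves. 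You instead work pointwise throughout: limits in both presheaf categories are computed objectwise, $\bcg$ evaluated at $x$ agrees with $j^{-1}Gi$, and $G$ preserves limits as a right adjoint (and reflects them because it is continuous and conservative with complete domain). Your route buys a more self-contained, concrete verification that does not lean on the adjointness of $G^\U \circ (-)$ at the level of functor categories, at the cost of extra bookkeeping (the commuting square with the evaluation functors, and joint preservation and reflection of limits by the $\text{ev}_x$); you could also have shortened your reflection step by applying the conservative-plus-continuous argument directly to $\bcg$ rather than to $G$. Both arguments rely on Remark \ref{remark:conicallimitsareordinarylimits} to pass between conical and ordinary limits, as they must.
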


\begin{proof} First, note that since $G^\U$ is a right $\U$-adjoint, so is $G^\U \circ (-)$. Corollary \ref{cor:G*-fullyfaithful-conservative} allows us to conclude that the composite $\bcg(-) = G^\U \circ G_*(-)$ preserves (unenriched) limits.

To show that $\bcg$ is faithful and conservative, we use the equality of Remark \ref{remark:G0-is-GU}, and recall the notation $i,j$ of \ref{eq:i-j-k} for the relevant isomorphisms of categories. If $\alpha, \beta : A \Rightarrow B$ are such that $\bcg \alpha = \bcg \beta$, then for each $x \in \C$, we have $j^{-1} G i(\alpha_x) = j^{-1}Gi(\beta_x)$. Since $j^{-1}$, $G$, and $i$ are all faithful, we have $\alpha_x = \beta_x$, whence $\alpha = \beta$. Similarly, since $j^{-1}$, $G$, and $i$ are all conservative, if each $j^{-1}Gi(\alpha_x)$ is an isomorphism, then so must $\alpha_x$ have been. Thus $\alpha$ is an isomorphism.

Since a conservative functor reflects any limits which it preserves, and recalling Remark \ref{remark:conicallimitsareordinarylimits}, which says that unenriched limits coincide with conical limits in our setting, we are done.
\end{proof}

We will later use Proposition \ref{prop:bcg-limits} in the specific case of cotensors, so we state this case now as a corollary.

\begin{corollary} \label{cor:bcg-cotensor} Suppose $G$ is the right adjoint of the pair \ref{main-guy}. For $y \in \G_\U$ and $R \in [\C^\text{op}, \V]$, $$\bcg \{Fy, R\} := G^\U \circ  G_*\{Fy,R\} \cong \{y, \bcg R\}.$$ \end{corollary}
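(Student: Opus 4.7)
The plan is to reduce the statement to a pointwise computation and then recognize the pointwise values as being related by the hom-object isomorphism $\Phi^\U$ of the $\U$-adjunction $F^\U \dashv G^\U$ from \ref{eq:induced-U-adjunction}.

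First, I would unpack both sides pointwise. By Definition \ref{functor-cotensor}, the cotensor in $[\C^\text{op},\V]$ is computed pointwise, so $\{Fy,R\}(x) = \V(Fy, Rx)$ for each $x \in \C$. Since $G_*$ acts identically on objects of a $\V$-functor (Definition \ref{cb-functor}(ii)), and since $G^\U$ acts on objects as $G$ (the object-level underlying adjunction of \ref{eq:induced-U-adjunction} is $F \dashv G$), we obtain
\[
(\bcg\{Fy,R\})(x) = G^\U\bigl(G_*\{Fy,R\}(x)\bigr) = G\bigl(\V(Fy, Rx)\bigr).
\]
The analogous pointwise computation in $[G_*\C^\text{op},\U]$ yields $\{y, \bcg R\}(x) = \U(y, G(Rx))$.

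Next, I would produce a pointwise isomorphism in $\U$ using the component $\Phi^\U_{y, Rx} : G\V(Fy, Rx) \to \U(y, G(Rx))$ of the natural isomorphism of hom-objects for the $\U$-adjunction \ref{eq:induced-U-adjunction}. This gives the required isomorphism at each $x$.

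Finally, to assemble the pointwise isomorphisms into a $\U$-natural isomorphism of $\U$-functors $G_*\C^\text{op} \to \U$, I would regard $\{\Phi^\U_{y, v}\}_v$ as a $\U$-natural transformation between the $\U$-functors $G\V(Fy,-)$ and $\U(y, G(-))$ from $G_*\V$ to $\U$, and then whisker with the $\U$-functor $G_*R : G_*\C^\text{op} \to G_*\V$; both $\bcg\{Fy, R\}$ and $\{y, \bcg R\}$ are by construction exactly the two composite $\U$-functors so obtained. The only real obstacle is keeping careful track of the three different incarnations of $G$ (as $G$ itself, as the object action of $G^\U$, and through $\bcg$), after which $\U$-naturality is automatic from the $\U$-naturality of $\Phi^\U$. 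As an alternative route, one could invoke Proposition \ref{prop:bcg-limits} directly—$\bcg$ preserves conical limits hence cotensors—and then identify the resulting cotensor on the $\U$-side using Remark \ref{cb-cotensor}; but since cotensors in functor categories are pointwise, the direct verification above is cleaner.
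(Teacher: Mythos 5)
Your proposal is correct, but it takes a more explicit route than the paper, whose entire proof is the one-line observation that cotensors in enriched functor categories are realized as pointwise conical limits (so that the claim follows from Proposition \ref{prop:bcg-limits} on preservation of conical limits, together with Remark \ref{cb-cotensor} identifying cotensoring by $y$ over $\U$ with cotensoring by $Fy$). You instead unpack both sides pointwise and exhibit the isomorphism directly as the component $\Phi^\U_{y,Rx} : G\V(Fy,Rx) \to \U(y,G(Rx))$ of the hom-object isomorphism for the induced $\U$-adjunction \ref{eq:induced-U-adjunction}, assembling the components by whiskering the $\U$-natural isomorphism $\Phi^\U$ with $G_*R$. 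Both arguments work; what yours buys is that it uses only the hypothesis actually stated in the corollary ($G$ a right adjoint), whereas the paper's appeal to Proposition \ref{prop:bcg-limits} implicitly imports the faithfulness and conservativity assumptions under which that proposition was proved, and your version also makes the isomorphism canonical and explicit, which is what is actually used later when comparing the pullback squares \ref{eq:pullback-in-U-1} and \ref{eq:pullback-in-U-2}. The price is the bookkeeping you acknowledge: one must check that $G^\U \circ G_*(\V(Fy,-))$ agrees with the covariant hom-functor $G_*\V(F^\U y,-)$ of the $\U$-category $G_*\V$ not just on objects but on hom-objects, which is a routine consequence of how $G^\U$ is defined but should be said. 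You also correctly identify the paper's argument as the shorter alternative.
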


\begin{proof}
        Cotensors in enriched functor categories can be realized as pointwise conical limits - see \cite[7.4.3]{riehl-cht}.
    \end{proof}

We have established our results on the limiting behavior of $\bcg$, so we turn to describing some of its lattice-theoretic properties. For an object $x$ of $\C$, recall that the set $\M_\V(\C(-,x))$ (defined in \ref{def:mono-lattice}) is a preorder under the relation $$ (\alpha : A \to \C(-,x)) \geq (\beta : B \to \C(-,x)) \; \iff \; \exists (\sigma : B \to A) \; \text{ such that } \beta = \alpha \sigma.$$ Note that $\M_\V(\C(-,x))$ may be large in general.

\begin{theorem}\label{thm:injective-monos} Suppose $G$ is faithful, conservative, and the right adjoint of the pair \ref{main-guy}. For any $\V$-functor $K : \C^\text{op} \to \V$, the assignment
        $$ \begin{tikzcd}
            {\M_{\V}(K)} & {\M_{\U}(\bcg K)}
            \arrow["{\bcg(-)}", from=1-1, to=1-2]
        \end{tikzcd} $$
is an injective morphism of preorders.
\end{theorem}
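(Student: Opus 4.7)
The plan is to verify three properties of the assignment $\bcg(-)$: that it sends monomorphisms to monomorphisms (so is well-defined into $\M_\U(\bcg K)$), that it preserves the preorder $\leq$, and that it reflects the equivalence $\sim$ generated by $\leq$. The last property is what ``injective morphism of preorders'' ought to mean here, since it is exactly the condition needed for $\bcg$ to descend to an injection on the subobject posets $\text{Sub}_\V(K) \to \text{Sub}_\U(\bcg K)$.

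Well-definedness is immediate: by Proposition \ref{prop:bcg-limits}, $\bcg$ preserves conical limits and hence in particular pullbacks, so it preserves monomorphisms, carrying $\M_\V(K)$ into $\M_\U(\bcg K)$. Order-preservation is immediate from functoriality of $\bcg$: if $\beta = \alpha\sigma$ then $\bcg\beta = \bcg\alpha \cdot \bcg\sigma$.

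The substantive step is order-reflection. Suppose $\alpha : A \mono K$ and $\beta : B \mono K$ lie in $\M_\V(K)$ and that there exists $\tau : \bcg A \to \bcg B$ with $\bcg\alpha = \bcg\beta \cdot \tau$. Form the pullback $P := A \times_K B$ in $[\C^\text{op},\V]_0$ with projections $p_A, p_B$. By Proposition \ref{prop:bcg-limits}, $\bcg$ preserves this pullback, so $\bcg P \cong \bcg A \times_{\bcg K} \bcg B$ with projections $\bcg p_A, \bcg p_B$. Since $\bcg\beta$ is monic, its pullback $\bcg p_A$ is monic; and the universal morphism $(\id_{\bcg A}, \tau) : \bcg A \to \bcg P$ induced by $\tau$ is a section of $\bcg p_A$. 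A morphism that is both monic and split epic is an isomorphism, so $\bcg p_A$ is invertible. Conservativity of $\bcg$ (again Proposition \ref{prop:bcg-limits}) then forces $p_A$ itself to be invertible, and $\sigma := p_B \cdot p_A^{-1}$ satisfies $\beta\sigma = \alpha$, witnessing $\alpha \leq \beta$ in $\M_\V(K)$. Swapping the roles of $\alpha$ and $\beta$ yields the reverse inequality, so $\bcg\alpha \sim \bcg\beta$ indeed implies $\alpha \sim \beta$.

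The main obstacle is this order-reflection step, and it rests entirely on $\bcg$ both preserving and reflecting the pullback $A \times_K B$; this is the content of Proposition \ref{prop:bcg-limits}, after which the pullback manipulation is purely formal.
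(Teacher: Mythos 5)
Your argument is sound, but it proves a different statement from the one the paper intends here, and in doing so it absorbs material the paper deliberately postpones. In the paper, ``injective'' in Theorem \ref{thm:injective-monos} means literal injectivity of the map on the underlying class $\M_\V(K)$ of monomorphisms, and the proof is one line: $\bcg$ is faithful (part of Proposition \ref{prop:bcg-limits}), so $\bcg\alpha = \bcg\beta$ forces $\alpha = \beta$. Immediately after the theorem the paper observes that this does \emph{not} yet descend to subobjects, and it introduces the order-reflecting property (Definition \ref{def:orom}, Propositions \ref{prop:faithful-conservative-orom} and \ref{prop:bcg-orom}) precisely to obtain Theorem \ref{thm:injective-sieves}. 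What you prove --- reflection of the preorder, hence of the equivalence $\sim$ --- is exactly that postponed content. Your pullback-and-section argument is essentially the paper's proof of Proposition \ref{prop:faithful-conservative-orom}, but run directly on $\bcg$: where the paper needs $G$ to be a right adjoint in order to preserve the relevant pullback, and then transfers the conclusion to $\bcg$ componentwise (with attendant care that the induced $\sigma_0$ is $\V$-natural, via Corollary \ref{cor:bcg-natural-iff-v-natural}), you invoke Proposition \ref{prop:bcg-limits} to preserve and reflect the pullback of presheaves wholesale, which is cleaner and sidesteps the componentwise bookkeeping. The only thing missing relative to the theorem as stated is the set-level injectivity itself; if you want it, it follows at once from faithfulness of $\bcg$, also established in Proposition \ref{prop:bcg-limits}, which your proposal never invokes.
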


\begin{proof} Since $\bcg$ preserves limits by Proposition \ref{prop:bcg-limits}, the function is well-defined; that it is monotone is by functoriality of $\bcg$. Injectivity is by faithfulness of $\bcg$, also proved as part of Proposition \ref{prop:bcg-limits}.
\end{proof}

Observe that the map $\bcg(-)$ above, while well-defined on classes of monomorphisms, is not necessarily well-defined on subobjects, since we might in principle have a situation where $\bcg(\beta) = \bcg(\alpha) \circ \sigma$ for some isomorphism $\sigma$, even though $\alpha$ and $\beta$ are not isomorphic. Thus, in order to extend Theorem \ref{thm:injective-monos} to an injective mapping on subobjects, we need to ensure that $\bcg$ has one further property.

\begin{definition} \label{def:orom}
    Say that a functor $K$ is \textbf{order-reflecting on monomorphisms}, or simply \textbf{order-reflecting}, if it preserves monomorphisms and if whenever $f,g$ are monomorphisms such that $K(f) = K(g) \circ h$, then there exists some $h_0$ such that $h = K(h_0)$, and $f = g \circ h_0$.
\end{definition}

Clearly, any fully faithful right adjoint functor is order-reflecting, as is any functor which preserves monomorphisms and which is sourced in a category where all monomorphisms are strongly cartesian. In fact, to ensure that a right adjoint functor is order-reflecting, being faithful and conservative is sufficient. The following argument originated with \cite{fosco}, but we have modified it for our particular setting.

\begin{proposition} \label{prop:faithful-conservative-orom}
    If $G$ is faithful and conservative, then $G$ is order-reflecting on monomorphisms.
\end{proposition}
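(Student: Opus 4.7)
The plan is to exhibit the desired lifting $h_0$ by constructing it as a pullback in $\V$, exploiting the fact that $G$, being a right adjoint, preserves the pullback in question, and then using conservativity to descend an isomorphism back to $\V$.

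First, since $G$ is a right adjoint it preserves all small limits, and in particular monomorphisms, so the first clause of Definition \ref{def:orom} is immediate. For the second clause, suppose we are given monomorphisms $f : A \mono C$ and $g : B \mono C$ in $\V$ together with $h : GA \to GB$ in $\U$ satisfying $Gf = Gg \circ h$. Since $\V$ has all small limits by Hypothesis \ref{assumptions-V}, form the pullback $P := A \times_C B$ in $\V$ with projections $\pi_1 : P \to A$ and $\pi_2 : P \to B$; then $\pi_1$ is a monomorphism as a pullback of the monomorphism $g$.

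Now $G$ preserves this pullback, so $GP$ is the pullback of $Gf$ along $Gg$ in $\U$, with $G\pi_1$ monic. The assumed identity $Gf \circ \id_{GA} = Gg \circ h$ furnishes a unique mediating morphism $s : GA \to GP$ satisfying $G\pi_1 \circ s = \id_{GA}$ and $G\pi_2 \circ s = h$. In particular, $G\pi_1$ is split epic; being also monic, it is an isomorphism in $\U$. Because $G$ is conservative, $\pi_1$ itself is an isomorphism in $\V$. Define
\[ h_0 := \pi_2 \circ \pi_1^{-1} : A \to B. \]
The pullback equation $g \circ \pi_2 = f \circ \pi_1$ immediately gives $g \circ h_0 = f$. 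Moreover, since $s$ is the unique morphism into $GP$ with first component $\id_{GA}$ and second component $h$, and since $G\pi_1 \circ G(\pi_1^{-1}) = \id_{GA}$ together with $G\pi_2 \circ G(\pi_1^{-1})$ fit that universal property, we obtain $s = G(\pi_1^{-1})$ and therefore $G(h_0) = G\pi_2 \circ s = h$, as required.

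The heart of the argument is the observation that the mediating morphism $s$ into the preserved pullback automatically splits $G\pi_1$, so monomorphism-plus-split-epimorphism-equals-isomorphism turns the hypothesis into an $\U$-isomorphism that conservativity then pulls back to $\V$. I do not expect any serious technical obstacle: existence of the pullback is guaranteed by local presentability of $\V$, preservation by right adjointness of $G$, and the descent of the isomorphism by conservativity. The only point requiring a bit of care is verifying that the candidate lift $h_0$ actually satisfies $G(h_0) = h$ (rather than merely some morphism equal to $h$ after further postcomposition), and this is handled by the uniqueness clause in the universal property of the pullback $GP$.
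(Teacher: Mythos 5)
Your proof is correct and follows essentially the same route as the paper's: form the pullback of $f$ along $g$ in $\V$, use right-adjointness to see that $G$ preserves it, extract the mediating section of $G\pi_1$ from the hypothesis $Gf = Gg\circ h$, conclude that $G\pi_1$ is an isomorphism, and descend via conservativity to define $h_0 = \pi_2\circ\pi_1^{-1}$. The only (cosmetic) difference is that you observe $g\circ h_0 = f$ directly from the pullback square, whereas the paper invokes faithfulness of $G$ for that final equality.
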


\begin{proof}
    Given monomorphisms $f,g$ in $\V$, we can take the pullback \[\begin{tikzcd}
	P & B \\
	A & C
	\arrow["r", from=1-1, to=1-2]
	\arrow["s"', from=1-1, to=2-1]
	\arrow["\lrcorner"{anchor=center, pos=0.125}, draw=none, from=1-1, to=2-2]
	\arrow["g", tail, from=1-2, to=2-2]
	\arrow["f"', tail, from=2-1, to=2-2]
\end{tikzcd}.\] Supposing we have $Gf = Gg \circ h$ for some $h : GA \to GB$, we have (since $G$ is a right adjoint) a pullback
\[\begin{tikzcd}
	GP & GB \\
	GA & GC
	\arrow["Gr", from=1-1, to=1-2]
	\arrow["Gs"', from=1-1, to=2-1]
	\arrow["\lrcorner"{anchor=center, pos=0.125}, draw=none, from=1-1, to=2-2]
	\arrow["Gg", rightarrowtail, from=1-2, to=2-2]
	\arrow["h"{description}, from=2-1, to=1-2]
	\arrow["Gf"', rightarrowtail, from=2-1, to=2-2]
\end{tikzcd}.\] It is easy to check that both triangles commute.

By the universal property of $GP$, there exists some $\nu : GA \to GP$ such that $Gs \circ \nu = \id_{GA}$ and $h = Gr \circ \nu$. Since $Gf = Gg \circ h$, we have $$Gg \circ Gr \circ \nu \circ Gs = Gg \circ Gr,$$ and since both $Gg$ and $Gr$ are monomorphisms, we have $\nu \circ Gs = \id_{GP}$. We see that $Gs$ is an isomorphism, and since $G$ is conservative, $s$ must have been an isomorphism in $\V$. Thus, we have $\nu = G(s^{-1})$. Taking $h_0 = r \circ s^{-1}$, we have $G(h_0) = h$, and since $G$ is faithful, we have $f = g \circ h_0$.
\end{proof}

To prove our main theorem, we will require $\bcg$ itself to be order-reflecting, a property which we will see is inherited from $G$. Note that $\bcg$ is not immediately order-reflecting as a result of Proposition \ref{prop:faithful-conservative-orom}, since we do not know that it is a right adjoint.

\begin{proposition} \label{prop:bcg-orom}
    If $G$ is order-reflecting on monomorphisms, then $\bcg$ is order-reflecting on monomorphisms.
\end{proposition}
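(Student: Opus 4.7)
The plan is to verify both parts of Definition \ref{def:orom} componentwise, relying on the following observation: for any $\V$-natural transformation $\alpha$ of $\V$-presheaves, the component of $\bcg\alpha$ at $x$, viewed in $\U^u$ via the isomorphism $j$ of \ref{eq:i-j-k}, is precisely $G$ applied to the corresponding component of $\alpha$ viewed in $\V^u$ via $i$. This is essentially the content of Remark \ref{remark:G0-is-GU}. Preservation of monomorphisms is then immediate, since a morphism in $[\C^{op}, \V]$ is monic iff each of its components is monic in $\V$, and $G$ preserves monos by the order-reflecting hypothesis.

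For the main order-reflection condition, suppose $f : A \mono K$ and $g : B \mono K$ are monomorphisms in $[\C^{op}, \V]$ and $h : \bcg A \to \bcg B$ is a morphism in $[G_*\C^{op}, \U]$ with $\bcg f = \bcg g \circ h$. Taking components at each $x \in \C$ and transporting along $i$, $j$, I would read this as an equality $G(f_x) = G(g_x) \circ h_x$ in $\U^u$, where $f_x, g_x, h_x$ denote the underlying component morphisms. Since $G$ is order-reflecting on monomorphisms, the definition supplies, for each $x$, a morphism $(h_0)_x : Ax \to Bx$ in $\V^u$ with $h_x = G((h_0)_x)$ and $f_x = g_x \circ (h_0)_x$. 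It remains to verify that the family $\{(h_0)_x\}$ assembles into a $\V$-natural transformation $h_0 : A \Rightarrow B$; once that is done, the desired equalities $h = \bcg h_0$ and $f = g \circ h_0$ will hold componentwise, hence globally.

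The main obstacle is this assembly step, and I would dispatch it by invoking Corollary \ref{cor:bcg-natural-iff-v-natural}: viewing each $(h_0)_x$ (via $i^{-1}$) as a morphism $*_\V \to \V(Ax, Bx)$, the family $\{(h_0)_x\}$ is $\V$-natural if and only if its image $\bcg h_0 = \{h_x\}$ is $\U$-natural. The latter holds by the hypothesis that $h$ is a $\U$-natural transformation, so $h_0$ is $\V$-natural as required. (Applicability of Corollary \ref{cor:bcg-natural-iff-v-natural} requires $G$ to be faithful, which is the natural setting for the section; indeed, Proposition \ref{prop:faithful-conservative-orom} says the standard way to produce an order-reflecting $G$ is already via the stronger hypothesis of faithfulness and conservativity, so nothing is lost in assuming it here.) The key insight is thus that, while $\bcg$ is not a right adjoint and therefore does not fall under Proposition \ref{prop:faithful-conservative-orom} directly, it inherits order-reflection from $G$ purely by working componentwise and invoking the naturality-reflection already established.
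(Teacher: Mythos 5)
Your proof is correct and follows essentially the same route as the paper: reduce to components via Remark \ref{remark:G0-is-GU}, apply order-reflection of $G$ (together with the isomorphisms $i$, $j$) to produce the componentwise lift, and then invoke Corollary \ref{cor:bcg-natural-iff-v-natural} to assemble the components into a $\V$-natural transformation. Your parenthetical about needing faithfulness of $G$ for that corollary is a fair point of care that the paper's own proof leaves implicit, relying on the section's standing hypotheses.
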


\begin{proof}
    Suppose $\alpha, \beta : A \Rightarrow B$ are monomorphisms in $[\C^\text{op},\V]$ such that $\bcg \alpha = \bcg \beta \circ \sigma$, so that for each $x$, we have $\bcg \alpha_x = \bcg \beta_x \circ \sigma_x$. Referring to the equality in Remark \ref{remark:G0-is-GU}, we have $$  j^{-1}Gi(\alpha_x) = j^{-1}Gi(\beta_x) \circ \sigma_x $$ for each $x \in \C$. Since $j^{-1}$ is an isomorphism of categories, we have $\sigma_x = j^{-1}(\sigma_x')$ for some $\sigma_x'$, and thus $$Gi(\alpha_x) = Gi(\beta_x) \circ \sigma_x'. $$ Since $G$ is order-reflecting, and since by the same argument as for $j^{-1}$, $i$ is order-reflecting, there is some $(\sigma_0)_x$ such that $\alpha_x = \beta_x \circ (\sigma_0)_x$. Note that $\sigma_0 = j^{-1}Gi(\sigma) = \bcg(\sigma)$, so $\sigma_0$ is $\V$-natural by Corollary \ref{cor:bcg-natural-iff-v-natural}, and we have $\alpha = \beta \circ \sigma_0$.
\end{proof}

We come to the central objective of this section: proving that the functor $\bcg(-)$ induces an injective map on subobject lattices. Recall that as usual, for $K \in [\C^\text{op},\V]$, the preordering on $\M_\V(K)$ induces a partial order on $\text{Sub}_\V(K)$ (defined in \ref{def:sub-lattice}).

\begin{theorem} \label{thm:injective-sieves}
    Suppose $G$ is faithful and conservative. For a $\V$-functor ${K : \C^\text{op} \to \V}$, the assignment $$ \begin{tikzcd}
            {\text{Sub}_{\V}(K)} & {\text{Sub}_{\U}(\bcg K)}
            \arrow["{\bcg(-)}", from=1-1, to=1-2]
        \end{tikzcd} $$
induced by the morphism of preorders in Theorem \ref{thm:injective-monos} is an injective morphism of partially ordered sets.
\end{theorem}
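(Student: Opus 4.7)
The plan is to descend the injective monotone map $\bcg(-) : \M_\V(K) \to \M_\U(\bcg K)$ of Theorem \ref{thm:injective-monos} to the quotient posets $\text{Sub}_\V(K)$ and $\text{Sub}_\U(\bcg K)$. There are two things to verify: that the induced assignment $[\alpha] \mapsto [\bcg\alpha]$ is well-defined on equivalence classes, and that it remains injective. Monotonicity on posets is then automatic from monotonicity on preorders.

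For well-definedness, suppose $(\alpha : A \mono K) \sim (\beta : B \mono K)$ in $\M_\V(K)$, so that $\alpha = \beta \circ t$ for some isomorphism $t : A \to B$ in $[\C^\text{op},\V]_0$. Applying the functor $\bcg$ gives $\bcg\alpha = \bcg\beta \circ \bcg t$, and since any functor preserves isomorphisms, $\bcg t$ is an isomorphism in $[G_*\C^\text{op},\U]_0$. Hence $\bcg\alpha \sim \bcg\beta$, and the assignment $[\alpha] \mapsto [\bcg\alpha]$ is well-defined.

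For injectivity, suppose $\alpha, \beta \in \M_\V(K)$ with $[\bcg\alpha] = [\bcg\beta]$, that is, $\bcg\alpha = \bcg\beta \circ \tau$ for some isomorphism $\tau$ in $\M_\U(\bcg K)$. Since $G$ is faithful and conservative, Proposition \ref{prop:faithful-conservative-orom} shows $G$ is order-reflecting on monomorphisms, and Proposition \ref{prop:bcg-orom} then shows that $\bcg$ inherits this property. Applying order-reflection to the equation $\bcg\alpha = \bcg\beta \circ \tau$ produces a morphism $\tau_0 : A \to B$ with $\tau = \bcg\tau_0$ and $\alpha = \beta \circ \tau_0$. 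It remains to confirm that $\tau_0$ is an isomorphism; but $\bcg\tau_0 = \tau$ is an isomorphism by hypothesis, and $\bcg$ is conservative by Proposition \ref{prop:bcg-limits}, so $\tau_0$ is an isomorphism. Hence $\alpha \sim \beta$, giving the required injectivity.

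The only potentially nontrivial step is obtaining the morphism $\tau_0$ above, but all of the work for this has already been done: Proposition \ref{prop:faithful-conservative-orom} is the conceptual heart of the argument, and Proposition \ref{prop:bcg-orom} is what lets us transport its conclusion from $G$ to $\bcg$. Beyond invoking these, the rest of the proof is a routine unpacking of the equivalence relation defining subobjects.
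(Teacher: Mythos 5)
Your proof is correct and follows essentially the same route as the paper: monotonicity descends from Theorem \ref{thm:injective-monos}, and injectivity is obtained by combining order-reflection of $\bcg$ (Propositions \ref{prop:faithful-conservative-orom} and \ref{prop:bcg-orom}) with conservativity from Proposition \ref{prop:bcg-limits}. Your well-definedness step via functoriality (functors preserve isomorphisms) is if anything slightly cleaner than the paper's appeal to order-reflection there, but the substance of the argument is the same.
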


\begin{proof}
    That the function is monotone follows from Theorem \ref{thm:injective-monos}, and it is well defined because $\bcg$ is order-reflecting. To see that it is injective, suppose $(r : R \mono K)$ and $(s : S \mono K)$ are such that $\bcg R = \bcg S$ as subobjects. In particular, there exists some $\U$-natural isomorphism $\sigma$ for which $\bcg r = \bcg s \circ \sigma$. Since $G$ is order-reflecting, $\bcg$ is order reflecting by Proposition \ref{prop:bcg-orom}. Thus there is some $\V$-natural transformation $\sigma_0$ for which $\bcg \sigma_0 = \sigma$, and since $\bcg$ is conservative by Proposition \ref{prop:bcg-limits}, $\sigma_0$ is an isomorphism. Then $r = s \circ \sigma_0$, so $R = S$ as subobjects.
\end{proof}

\subsection{Application to finite-length representations} Before moving on to our discussion of enriched coverages, we briefly mention an application of the above result. Here, we take the perspective that a $\V$-functor $\C^\text{op} \to \V$ is a $\V$-representation of $\C^\text{op}$ on $\V$, and obtain a generalization of a result from elementary representation theory.

As usual, we say that an ascending chain $$ R_0 \leq R_1 \leq ... \leq K $$ or descending chain $$ ... \leq R_1 \leq R_0 \leq K $$ of subobjects of a $\V$-functor $K$ \textbf{stabilizes} if there exists some $i$ such that for all $j \geq i$, we have $R_j = R_i$. We say that $K$ satisfies the \textbf{ascending chain condition} if any ascending chain of subobjects stabilizes, and the \textbf{descending chain condition} if any descending chain of subobjects stabilizes. We say that a $\V$-functor $K : \C^\text{op} \to \V$ has \textbf{finite length} if it satisfies both the ascending and descending chain conditions.

As an almost immediate consequence of Theorem \ref{thm:injective-sieves}, we have the following:

\begin{proposition} \label{prop:finitelength}
    Suppose $G$ is faithful and conservative, and say $Q : \C^\text{op} \to \V$ is a $\V$-functor. If $\bcg Q$ has finite length, then $Q$ has finite length. 
\end{proposition}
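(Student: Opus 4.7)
The plan is to derive this almost entirely from Theorem \ref{thm:injective-sieves}. By that theorem, under the hypothesis that $G$ is faithful and conservative, the assignment $\bcg(-) : \text{Sub}_\V(Q) \to \text{Sub}_\U(\bcg Q)$ is an injective morphism of posets. Injective monotone maps reflect both the ascending and descending chain conditions, so the argument will simply be to transport chains of subobjects of $Q$ through $\bcg$, stabilize them on the $\U$-side, and pull the stabilization back through injectivity.

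In more detail, I would first take an arbitrary ascending chain $R_0 \leq R_1 \leq R_2 \leq \cdots \leq Q$ in $\text{Sub}_\V(Q)$ and apply $\bcg$ pointwise. Monotonicity of $\bcg(-)$ gives an ascending chain $\bcg R_0 \leq \bcg R_1 \leq \bcg R_2 \leq \cdots \leq \bcg Q$ in $\text{Sub}_\U(\bcg Q)$. By hypothesis, $\bcg Q$ has finite length, so in particular it satisfies the ascending chain condition; therefore there exists $i$ such that $\bcg R_j = \bcg R_i$ as subobjects of $\bcg Q$ for all $j \geq i$. Injectivity of $\bcg(-)$ on $\text{Sub}_\V(Q)$ then gives $R_j = R_i$ for all such $j$, proving that the original chain stabilizes. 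The same argument with inequalities reversed handles descending chains, yielding that $Q$ satisfies both chain conditions.

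There is no real obstacle here: all the analytic content lives in Theorem \ref{thm:injective-sieves} (which in turn rests on Propositions \ref{prop:bcg-limits} and \ref{prop:bcg-orom}). The only small point to be careful about is that ``subobject'' throughout is meant in the poset-theoretic sense of equivalence classes in $\text{Sub}_\V(Q)$, so the chains $R_0 \leq R_1 \leq \cdots$ are chains of equivalence classes rather than of literal monomorphisms; but that is exactly the setting in which $\bcg(-)$ is proved injective, so no additional bookkeeping is required.
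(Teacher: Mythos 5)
Your proposal is correct and follows exactly the same route as the paper's proof: apply the monotone injective map $\bcg(-)$ of Theorem \ref{thm:injective-sieves} to a chain of subobjects, stabilize on the $\U$-side using the finite length of $\bcg Q$, and pull the stabilization back by injectivity. No differences worth noting.
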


\begin{proof} First, note that $\bcg$ preserves ascending chains, since by \ref{thm:injective-sieves}, it is monotone. To see that $Q$ satisfies the ascending chain condition, let $$ R_0 \leq R_1 \leq ... \leq Q $$ be an ascending chain. Then $$ \bcg R_0 \leq \bcg R_1 \leq ... \leq Q$$ is an ascending chain, so there exists an $i$ such that $\bcg R_i = \bcg R_j$ whenever $j \geq i$. By injectivity of $\bcg$, we have $R_i = R_j$ as subobjects, so the original chain terminates. The proof is identical for the case of a descending chain.
\end{proof}

\section{Enriched coverages under change of base}

In this section, we extend Theorem \ref{thm:injective-sieves} to an analogous result for coverages. Below, we refer to the monoidal adjunction \ref{main-guy} of the previous sections.

\subsection{Lattices of enriched coverages.} Given a category $\mathcal{W}$ satisfying Hypothesis \ref{assumptions-V} and a $\mathcal{W}$-category $\X$, we establish some properties of the collection of $\mathcal{W}$-coverages on $\X$. If $\X$ is small and $\mathcal{W}$ is both complete and well-powered, as is true in the case where $\mathcal{W}$ satisfies \ref{assumptions-V}, then $[\X^\text{op}, \mathcal{W}]$ is well-powered, as proven in \cite[4.15]{BUNGE196964}. It follows that the collection of $\mathcal{W}$-coverages on $\X$, which we will denote by $\Sigma(\X,\mathcal{W})$, is a small set.

Exactly as for ordinary topologies on a set of points, as in \cite{larson1975lattice}, and Grothendieck topologies on an ordinary category, as in \cite[V3, 3.2.13]{borceux}, $\mathcal{W}$-coverages form a complete lattice.

Let $J,K$ be two $\mathcal{W}$-coverages on $\X$. We will say that $K$ is a \textbf{refinement} of $J$ (and $J$ is \textbf{coarser} than $K$) if $$ J(x) \subseteq K(x) $$ for all objects $x \in \X$, in which case we use the notation $J \subseteq K$. Say $J = K$ if $J(x) = K(x)$ for all $x$. It is routine to check that $\Sigma(\X,\mathcal{W})$ is partially ordered under refinement, with top element the $\W$-coverage $D$ defined by \begin{equation} \label{eq:discrete-topology}
    D(x) := \text{Sub}(\X(-,x))
\end{equation} and bottom element $I$, with $$ I(x) := \{\X(-,x)\}.$$ Moreover, given a family $\{J_\alpha\}_{\alpha \in A} \subset \Sigma(\X,\mathcal{W})$, the assignment \[ S(x) := \bigcap_\alpha J_\alpha(x) \] defines a $\mathcal{W}$-coverage, which is easily seen to be the finest one which is coarser than any of the $J_\alpha$. Using the fact that the greatest lower bound property implies the least upper bound property on a small set proves the following:

\begin{proposition} \label{gt-lattice} For $\X$ small and $\mathcal{W}$ satisfying \ref{assumptions-V}, the set $\Sigma(\X,\mathcal{W})$ of $\mathcal{W}$-coverages is a complete lattice.
\end{proposition}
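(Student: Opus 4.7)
The plan is to use the standard lattice-theoretic fact that a partially ordered set which has a top element and all (set-indexed) meets is automatically a complete lattice, since joins can be constructed as meets of the (nonempty) set of upper bounds. The author has already observed that $\Sigma(\X,\W)$ is a small set, admits the top element $D$ of \ref{eq:discrete-topology}, and is partially ordered by refinement, so the remaining work is essentially to verify that the proposed meet behaves correctly.

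First, I would verify that for any family $\{J_\alpha\}_{\alpha \in A} \subseteq \Sigma(\X,\W)$, the pointwise assignment $S(x) := \bigcap_\alpha J_\alpha(x)$ is actually a $\W$-coverage. Axiom (T1) is immediate, because $\X(-,x) \in J_\alpha(x)$ for each $\alpha$ by (T1) applied to each $J_\alpha$, so $\X(-,x) \in S(x)$. For (T2), fix $R \in S(x)$, fix $y \in \X$, and fix $f : g \to \X(y,x)$ with $g \in \G_\W$; since $R \in J_\alpha(x)$ for every $\alpha$ and each $J_\alpha$ satisfies (T2), we have $R_f \in J_\alpha(y)$ for every $\alpha$, so $R_f \in S(y)$. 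By construction $S$ is the greatest lower bound of the family $\{J_\alpha\}$ in $\Sigma(\X,\W)$.

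Second, I would construct joins from meets. Given a family $\{J_\alpha\}_{\alpha \in A}$, consider the collection
\[
U := \{K \in \Sigma(\X,\W) : J_\alpha \subseteq K \text{ for all } \alpha \in A\}.
\]
This set is nonempty, since $D \in U$, and it is a subset of the small set $\Sigma(\X,\W)$, so it is itself a small set. By the previous step, the meet $\bigwedge U$ exists in $\Sigma(\X,\W)$; by construction it is the least upper bound of $\{J_\alpha\}$. Combined with the existence of meets, this shows that $\Sigma(\X,\W)$ has all small meets and all small joins, so it is a complete lattice.

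No step seems to be a genuine obstacle here; the only thing that requires attention is checking that meets preserve the coverage axioms, and this is straightforward because (T1) and (T2) are both universally quantified conditions that pass to intersections. The proof does not need (T3) at all, which is consistent with the fact that the statement is about coverages rather than topologies.
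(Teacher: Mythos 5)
Your proof is correct and follows essentially the same route as the paper: pointwise intersection gives all meets (with (T1) and (T2) passing to intersections), and joins are then obtained as meets of the nonempty set of upper bounds, which is exactly the ``greatest lower bound property implies least upper bound property'' step the paper invokes. You simply spell out details the paper leaves as routine.
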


\subsection{Change of base for $\V$-coverages.} To prove the following proposition, we want to be able to say that the left adjoint $F$ of \ref{main-guy} preserves generating families. This property is ensured if $G$ is faithful and conservative: it is proved in \cite[2.2.1]{borger-tholen} that $G$ is faithful and conservative if and only if the family $$\{Fx : x \in \mathcal{H}\}$$ is (extremally) generating in $\V$ whenever $\mathcal{H}$ is (extremally) generating in $\U$. By \cite[2.19]{adamek-rosicky}, $F$ preserves finitely presentable objects, and thus $$ F\U_{fp} := \{Fx : x \in \U_{fp}\} $$ is a generating set of finitely presentable objects in $\V$. Given $\G_\U$, we may therefore take $\G_\V = F\G_\U$.

In proving the following proposition, we will be concerned with generalized elements $f \in \hom_\V(\G_\V, \C(y,x))$ of the hom-objects of $\C$. By \cite[1.6]{bq} and the remarks above, it will suffice to restrict our attention to those of the form $Fg$ for some $g \in \G_\U$.

\begin{proposition} \label{prop:bcg-coverage} Suppose $G$ is faithful and conservative. For a $\V$-coverage $J$ on $\C$, the assignment to each object $x \in \C$ of the family $$\bcg J (x) = \{\bcg R \mid R \in J(x)\}$$ defines a $\U$-coverage on $G_*\C$.\end{proposition}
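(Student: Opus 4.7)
Plan: To verify that $\bcg J$ is a $\U$-coverage on $G_*\C$, I need to check that each $\bcg R \in \bcg J(x)$ is a sieve on $x$ in $G_*\C$, and then verify axioms (T1) and (T2) of Definition \ref{v-gt}.

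First I would establish the identification $\bcg \C(-,x) \cong G_*\C(-,x)$ of $\U$-functors $G_*\C^{\text{op}} \to \U$. On objects both send $y \mapsto G(\C(y,x))$; on hom-actions the equality follows by unpacking $G^\U$ (Remark \ref{remark:G0-is-GU}) and the composition $\circ^{G_*\C} = G(\circ^\C)\cdot m$ of Definition \ref{cb-functor}, then applying monoidal coherence of $G$. With this identification, Theorem \ref{thm:injective-monos} gives a monomorphism $\bcg R \mono G_*\C(-,x)$ for each $R \in J(x)$, so the elements of $\bcg J(x)$ are indeed sieves on $x$ in $G_*\C$. Axiom (T1) is then immediate: since $\C(-,x) \in J(x)$ by (T1) for $J$, we have $G_*\C(-,x) \cong \bcg \C(-,x) \in \bcg J(x)$.

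For (T2), let $S = \bcg R \in \bcg J(x)$ with $R \in J(x)$, and let $f : g \to G_*\C(y,x)$ be a generalized element with $g \in \G_\U$. Since $G$ is faithful and conservative, the remarks preceding the proposition let me take $\G_\V = F\G_\U$, so the transpose $\hat f : Fg \to \C(y,x)$ of $f$ under $F \dashv G$ lies in $\hom_\V(\G_\V, \C(y,x))$. Applying (T2) for $J$ gives $R_{\hat f} \in J(y)$, and hence $\bcg R_{\hat f} \in \bcg J(y)$. The key claim is then that $\bcg R_{\hat f} \cong S_f$ as subobjects in $[G_*\C^{\text{op}},\U]$. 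By Definition \ref{pullback}, $R_{\hat f}$ is the pullback in $[\C^{\text{op}},\V]$ of
\[
\C(-,y) \xrightarrow{\hat f} \{Fg, \C(-,x)\} \xleftarrow{\iota} \{Fg, R\}.
\]
Proposition \ref{prop:bcg-limits} shows $\bcg$ preserves this conical pullback, and Corollary \ref{cor:bcg-cotensor} yields $\bcg\{Fg, -\} \cong \{g, \bcg(-)\}$. Under the identification $\bcg \C(-,y) \cong G_*\C(-,y)$, applying $\bcg$ to the diagram above yields precisely the pullback defining $S_f$, provided $\bcg \hat f$ corresponds to the Yoneda-induced $\U$-natural transformation associated to $f$.

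The main obstacle is verifying this last compatibility: that $\bcg$ sends the Yoneda transpose of $\hat f : Fg \to \C(y,x)$ to the Yoneda transpose of $f : g \to G_*\C(y,x)$, under the identifications $\bcg \C(-,y) \cong G_*\C(-,y)$ and $\bcg \{Fg, \C(-,x)\} \cong \{g, G_*\C(-,x)\}$. This is a coherence statement threading together the unenriched monoidal adjunction $F \dashv G$, the induced $\U$-adjunction $F^\U \dashv G^\U$, and the cotensor structure on $G_*\C$ from Remark \ref{cb-cotensor}. I would establish it by a diagram chase, writing out the Yoneda-induced $\V$-natural transformation explicitly in terms of $\hat f$ and the $\V$-enriched composition, applying $G$ (with its monoidal coherence data) componentwise, and comparing to the definition of $G^\U$ composed with $k$ from Remark \ref{remark:G0-is-GU} to recover $f$. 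Once the identification $\bcg \hat f = f$ (in the appropriate sense) is in hand, the pullback $\bcg R_{\hat f}$ is canonically identified with $S_f$, completing the verification of (T2).
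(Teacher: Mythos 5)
Your proposal follows essentially the same route as the paper: verify (T1) directly, and for (T2) transpose the generalized element along $F \dashv G$, form the pullback $R_{\hat f}$ in $[\C^\text{op},\V]$, and use preservation of limits (Proposition \ref{prop:bcg-limits}) together with Corollary \ref{cor:bcg-cotensor} to identify $\bcg(R_{\hat f})$ with $(\bcg R)_f$. You are in fact slightly more careful than the paper in making explicit the identification $\bcg\C(-,x) \cong G_*\C(-,x)$ and the compatibility of Yoneda transposes under $\bcg$, both of which the paper's proof leaves implicit when it declares the two pullback squares to be "pullbacks of the same diagram."
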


\begin{proof} We show that $\bcg J$ satisfies (T1) and (T2) of Definition \ref{v-gt}.

\textit{(T1).} For each $x \in \C$, we know that $\C(-,x) \in J(x)$, so by definition of $\bcg J$, we have $\bcg \C(-,x) \in \bcg J(x)$.

\textit{(T2).} Take any sieve $r: R \mono \C(-,x)$ in $J(x)$ (so that $(\bcg r: \bcg R \mono \bcg \C(-,x))$ is an arbitrary element of $\bcg J(x)$), any $g \in \G_\U$, and any generalized element ${a : g \to G\C(y,x)}$. We first show that the pullback $(\bcg R)_a$, given by the diagram \begin{equation} \label{eq:pullback-in-U-1}
    \begin{tikzcd}
		(\bcg R)_a \arrow[r] \arrow[d] & \{g,\bcg R\} \arrow[d, "\bcg r"] \\
		\bcg \C(-,y) \arrow[r, "a"] & \{g,\bcg \C(-,x)\}
	\end{tikzcd}
\end{equation} in $[G_*\C^\text{op},\U]$, is in $\bcg J(y)$. Taking the transpose $b : Fg \to \C(y,x)$ of $a$, we can form the pullback \[ \begin{tikzcd}
		R_b \arrow[r] \arrow[d] & \{Fg,R\} \arrow[d, "r"] \\
		\C(-,y) \arrow[r, "b"] & \{Fg,\C(-,x)\}
	\end{tikzcd} \] in $[\C^\text{op}, \V]$ of $r$ along $b$. Since $\bcg$ preserves limits by Proposition \ref{prop:bcg-limits}, the diagram \[ \begin{tikzcd}
	\bcg(R_b) \arrow[r] \arrow[d] & \bcg  \{Fg,R\} \arrow[d] \\
	\bcg \C(-,y) \arrow[r] & \bcg \{Fg,\C(-,x)\}
\end{tikzcd} \] is a pullback. Applying Corollary \ref{cor:bcg-cotensor} to the right-hand edge of this square, we have a pullback \begin{equation} \label{eq:pullback-in-U-2}
    \begin{tikzcd}
	\bcg(R_b) \arrow[r] \arrow[d] & \{g, \bcg R\} \arrow[d] \\
	\bcg \C(-,y) \arrow[r] & \{g, \bcg \C(-,x)\}
\end{tikzcd}.
\end{equation} Comparing the diagrams \ref{eq:pullback-in-U-1} and \ref{eq:pullback-in-U-2}, we see that $\bcg(R_b)$ and $(\bcg R)_a$ are pullbacks of the same diagram, so that $\bcg(R_b) \cong (\bcg R)_a$. Since $J$ is a $\V$-topology, we have $R_b \in J(y)$. By definition of $\bcg J$, we thus have $\bcg(R_b) \in \bcg J(y)$; and recalling that $\bcg J(y)$ was defined as a family of isomorphism classes of functors, we have $(\bcg R)_a \in \bcg J(y)$. Since $F\G_\U$ is a generating family for $\V$, and the result above holds for an arbitrary $g \in \G_\U$, we see that (T2) is satisfied. \end{proof}

We now have the machinery to prove the main result of this section, but before doing so, we need to address one minor technicality. Observe that since $\C$ is small, the $\U$-category $G_*\C$ is small. Since both $\U$ and $\V$ satisfy \ref{assumptions-V}, Proposition \ref{gt-lattice} shows that both $\Sigma(\C,\V)$ and $\Sigma(G_*\C,\U)$ are complete lattices, and so the statement of the theorem below makes sense.

\begin{theorem} \label{thm:injective-coverages} Suppose $G$ is faithful and conservative. The assignment $$ \begin{tikzcd}
            {\Sigma(\C,\V)} & {\Sigma(G_*\C,\U)}
            \arrow["{\bcg(-)}", from=1-1, to=1-2]
        \end{tikzcd} $$ is an injective morphism of lattices.
\end{theorem}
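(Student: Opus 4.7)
The plan is to assemble three ingredients, two of which have essentially been established already. First, well-definedness of the assignment $J \mapsto \bcg J$ as a map $\Sigma(\C,\V) \to \Sigma(G_*\C,\U)$ has been done in Proposition \ref{prop:bcg-coverage}: for each $\V$-coverage $J$ the family $\bcg J(x) = \{\bcg R : R \in J(x)\}$ satisfies (T1) and (T2), and since $\bcg$ is a functor it preserves isomorphisms, so the assignment descends to a well-defined map at the level of isomorphism classes of sieves.

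Second, I would verify the lattice-morphism properties. Monotonicity is immediate: if $J \subseteq K$ as coverages on $\C$, then $\bcg J(x) \subseteq \bcg K(x)$ pointwise. Meet preservation (where meets in $\Sigma$ are pointwise intersection, as in the discussion preceding Proposition \ref{gt-lattice}) reduces to Theorem \ref{thm:injective-sieves}: the inclusion $\bcg(J \wedge K) \subseteq \bcg J \wedge \bcg K$ is a formal consequence of monotonicity, and for the reverse, if $T \in \bcg J(x) \cap \bcg K(x)$, we can write $T = \bcg R = \bcg R'$ as subobjects of $\bcg \C(-,x)$ for some $R \in J(x)$ and $R' \in K(x)$. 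Theorem \ref{thm:injective-sieves} then forces $R = R'$ as subobjects, placing $R$ in both $J(x)$ and $K(x)$ and hence $T = \bcg R \in \bcg(J \wedge K)(x)$.

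Third, injectivity. Suppose $\bcg J = \bcg K$ in $\Sigma(G_*\C,\U)$. For each $x$ and each $R \in J(x)$, we have $\bcg R \in \bcg J(x) = \bcg K(x)$, so there exists $R' \in K(x)$ with $\bcg R = \bcg R'$ as subobjects of $\bcg \C(-,x)$. Theorem \ref{thm:injective-sieves} again gives $R = R'$ as subobjects, and since the families $K(x)$ are closed under isomorphism of sieves (as emphasized just after Definition \ref{v-gt}), we get $R \in K(x)$. Symmetry yields the reverse inclusion.

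The main obstacle, such as it is, is conceptual rather than technical: the genuinely hard work has been absorbed into Proposition \ref{prop:bcg-coverage} (which uses preservation of pullbacks by $\bcg$ along with the fact that $F$ takes $\G_\U$ to a generating family for $\V$) and into Theorem \ref{thm:injective-sieves} (which uses faithfulness, conservativity, and order-reflection of $\bcg$). Both lattice-operation preservation and injectivity of the present map then follow almost formally by packaging the pointwise information from Theorem \ref{thm:injective-sieves} into statements about the families $J(x)$.
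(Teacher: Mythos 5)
Your proposal is correct and follows essentially the same route as the paper: well-definedness via Proposition \ref{prop:bcg-coverage}, monotonicity and meet preservation by pointwise comparison of the families $\bcg J(x)$, and injectivity by reducing to Theorem \ref{thm:injective-sieves} together with closure of the families $J(x)$ under isomorphism of sieves. Your treatment of the reverse inclusion in the meet-preservation step is in fact slightly more explicit than the paper's, which states the relevant set equality without spelling out the appeal to injectivity of $\bcg$ on subobjects.
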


    \begin{proof} Proposition \ref{prop:bcg-coverage} shows that the assignment is well-defined. To see that it is monotone, suppose $J,K \in \Sigma(\C,\V)$ are such that $J \subseteq K$, so that $J(x) \subseteq K(x)$ for all objects $x$. Then for any $\bcg R \in \bcg J(x)$, we know that since $R \in K(x)$, we have $\bcg R \in \bcg K(x)$, whence $\bcg J \subseteq \bcg K$. 
    
    To see that meets are preserved, observe that \begin{align*} \bcg \left[\bigcap_\alpha J_\alpha \right](x) &= \{ \bcg R \mid R \in J_\alpha(x) \text{ for all } \alpha\} \\
    &= \bigcap_\alpha \{ \bcg R : R \in J_\alpha(x)\} \\
    &= \bigcap_\alpha \bcg J_\alpha(x). \end{align*}
    
    To prove injectivity, suppose $J,K$ are $\V$-coverages such that $\bcg J = \bcg K$. For all $x$, we thus have that (i) for each $\bcg R \in \bcg J(x)$, there exists an $S \in K(x)$ such that $\bcg R = \bcg S$; (ii) for each $\bcg S \in \bcg K(x)$, there exists an $R \in J(x)$ such that $\bcg S = \bcg R$. By \ref{thm:injective-sieves}, (i) implies that $J(x) \subseteq K(x)$, and (ii) implies that $K(x) \subseteq J(x)$, whence $J = K$. \end{proof}

\subsection{Examples of coverages under change of base} Having already discussed the case where the forgetful functor $\hom_\V(*_\V,-) : \V \longrightarrow \cat{Set}$ happens to be faithful, we give a few more examples of functors $G$ for which Theorem \ref{thm:injective-coverages} holds.

\begin{example} \textbf{(Monoids in $\W$)} As remarked in \cite[6.I]{measuring-comonoids}, with $\W$ satisfying \ref{assumptions-V}, the forgetful functor $G : \text{Mon}(\W) \to \W$ is monadic.
\end{example}

\begin{example} \textbf{(Restriction of scalars)} \label{example:restriction-of-scalars} Given a homomorphism $f : R \to S$ of commutative rings, restriction of scalars $$f^* : S\cat{Mod} \longrightarrow R\cat{Mod} $$ is lax monoidal and monadic, so Theorem \ref{thm:injective-coverages} says that any $S\cat{Mod}$-coverage on an $S$-linear category $\C$ induces a unique $R\cat{Mod}$-coverage on the corresponding $R$-linear category.
\end{example}

\begin{example} \textbf{(From strict 2-categories to simplicial categories)}
    Letting $\cat{Cat}$ denote a category of small strict categories, the nerve construction $$ N : \cat{Cat} \longrightarrow \cat{sSet} $$ is lax monoidal, fully faithful, and a right adjoint, so Theorem \ref{thm:injective-coverages} yields an injective mapping from $\cat{Cat}$-topologies on a strict 2-category $K$ to $\cat{sSet}$-topologies on $N_*K$.
\end{example}

\begin{example} \label{example:poset-to-proxet} \textbf{(From preorders to proximity sets)} Let ${\U =([0,1],\leq,\cdot,1)}$, where $\cdot$ denotes multiplication of real numbers, and let $\mathcal{L}$ be a $\U$-category. For a discussion of $\U$-categories in the context of formal concept analysis, we refer the reader to \cite{Pavlovic_2012}, where they are called proximity sets.

Recall that $\U(x,y) := \min\{1, y/x\}$, and that $\U$ is generated under filtered colimits by $\mathbb{Q} \cap [0,1]$, although it is not locally finitely presentable as a category. As in Example \ref{example:lawvere-coverage}, note that Definitions \ref{v-sieve} and \ref{v-gt} still make sense in this setting.

Unpacking Definition \ref{v-sieve} in this case, a $\U$-sieve on $x \in \mathcal{L}$ is a function ${r : \mathcal{L} \to [0,1]}$ which satisfies (for all $y,z \in \mathcal{L}$)
\begin{enumerate}
    \item[(i)] $\L(y,z) \cdot \L(x,y) \leq \U(rz, rx)$;
    \item[(ii)] $\U(rx,rx) = 0$;
    \item[(iii)] $\L(x,y) \leq \U(ry,rx)$;
    \item[(iv)] $rz \leq \L(z,x)$;
    \item[(v)] $\L(y,z) \leq \U(rz, \L(y,x))$,
\end{enumerate} where conditions (i)-(iii) arise from $\U$-functoriality of $r$, and conditions (iv) and (v) from $\U$-naturality of $r \mono \L(-,x)$. A $\U$-coverage on $\mathcal{L}$ is, for each $x \in \mathcal{L}$, a collection $J(x)$ of sieves $r : \mathcal{L} \to [0,1]$ such that 

\begin{enumerate}
    \item[(vi)] the function $\L(-,x) \in J(x)$;
    \item[(vii)] for any nonnegative rational number $q \leq \U(x,y)$ and $r \in J(x)$, the function $r_q$ defined by $$ r_q(z) := \min\{rz,\; \U(q,\L(z,y))\}$$ is in $J(y)$.
\end{enumerate}

The lax monoidal functor $G: \{0,1\} \hookrightarrow [0,1]$ which assigns $0 \mapsto 0$ and ${1 \mapsto 1}$ is easily seen to be both faithful and conservative, and is right adjoint to the functor $F: [0,1] \to \{0,1\}$ assigning $0 \mapsto 0$ and $x \mapsto 1$ whenever $x > 0$. As such, Theorem \ref{thm:injective-coverages} says that any $\{0,1\}$-coverage on a given poset $P$ (as in Example \ref{example:preorder-gt}) corresponds uniquely to a $[0,1]$-coverage on the proximity set $G_*P$.
\end{example} 

\begin{example} \textbf{(From proximity sets to Lawvere metric spaces, and back again)} \label{example:proxet-to-lawvere} We have an isomorphism $$ - \log(-) : [0,1] \leftrightarrows [0,\infty] : e^{-(-)}, $$ so a $[0,1]$-coverage on a proximity set $P$ corresponds uniquely to a $[0,\infty]$-coverage (Example \ref{example:lawvere-coverage}) on the Lawvere metric space corresponding to $P$ under $-\log(-)$. Similarly, any $[0,\infty]$-coverage uniquely determines a $[0,1]$-coverage.
\end{example}

\section{Enriched sheaves under change of base} In this section, we turn to investigating how change of base interacts with enriched sheaves, as defined in \cite{bq}. We begin by recalling the definition given in \cite[1.3]{bq} for an enriched sheaf. Below, we let $\W$ denote a category satisfying Hypothesis \ref{assumptions-V}, and $\X$ a small $\mathcal{W}$-category.

\begin{definition} \label{def:v-sheaf}
    A $\W$-functor $P \in [\X^\text{op},\W]$ is a \textbf{sheaf} for a $\W$-coverage $J$ when, given any object $x \in \X$, any $R \in J(x)$, any $g \in \G_\W$, and $\alpha$ such that
    \[\begin{tikzcd}
	R & {\X(-,x)} \\
	{\{g,P\}}
	\arrow["r", tail, from=1-1, to=1-2]
	\arrow["\alpha"', from=1-1, to=2-1]
	\arrow["{\exists ! \beta}", dotted, from=1-2, to=2-1]
\end{tikzcd},\] there exists a unique $\beta$ for which the diagram commutes. 
\end{definition}

Recall that a \textbf{localization} of $[\X^\text{op},\W]$ is a reflective $\W$-subcategory $\mathcal{K}$ whose reflector preserves finite weighted $\W$-limits. The central result of \cite{bq} says that, given a $\W$-topology $J$ on $\X$, we can construct a unique localization of $[\X^\text{op},\W]$, and vice-versa. In this section, we prove results concerning how change of base via $G$ interacts with this construction. Below, we suppose that $G : \V \to \U$ is faithful, conservative, and the right adjoint of the pair \ref{main-guy}.

\begin{definition} \label{def:sheafification} \cite[4.1, 4.4]{bq} Given a presheaf $P \in [\C^\text{op},\V]$, define a new presheaf $\Sigma P$ on objects by $$\Sigma P(x) = \text{colim}_{R \in J(x)} [\C^\text{op},\V](R,P).$$ The \textbf{sheafification} of $P$ with respect to $J$ is $\Sigma \Sigma P$. We will refer to the right adjoint $$ \ell : [\C^\text{op},\V] \longrightarrow \text{Sh}(\C,J) $$ to the inclusion functor $i: \text{Sh}_\V(\C,J) \hookrightarrow  [\C^\text{op},\V] $, where $\ell(P) := \Sigma \Sigma P$.
\end{definition}

A classical example is the case where $\V = \cat{Ab}$ and $\mathfrak{R}$ is a $\V$-topology on a $\V$-category $A$, as in Example \ref{example:ring-gabriel}. In this case, the functor $\ell$ is the canonical ring homomorphism from a ring $A$ to its localization $A_\mathfrak{R}$.

\begin{example} \cite[IX.1]{Stenstrom_1975} For a commutative ring $A$, we have $$A_\mathfrak{R} := \text{colim}_{I \in \mathfrak{R}} \hom_A(I, A/t(A)),$$ where $$t(A) := \{a \in A : aJ = 0 \text{ for some } J \in \mathfrak{R}\}.$$ In particular, if $S$ is a multiplicatively closed subset of $A$ containing no zero divisors and such that for $s \in S$ and $a \in A$, there exist $t \in S$ and $b \in A$ such that $sb = at$, the family $$ \mathfrak{R} := \{I \triangleleft A : I \cap S \neq \nada\}$$ (where $I \triangleleft A$ means that $I$ is an ideal of $A$) defines a Gabriel topology on $A$, and $A_\mathfrak{R}$ is isomorphic to the ring of fractions $A[S^{-1}]$.
\end{example}

We can also sheafify $\U$-presheaves on $G_*\C$ with respect to the $\U$-coverage $\bcg J$ of Proposition \ref{prop:bcg-coverage}. We will use the notation $$ \ell_G \dashv i_G : \text{Sh}(G_*\C,\bcg J) \leftrightarrows [G_*\C^\text{op}, \U] $$ for the resulting localization, and denote the units of both adjunctions $i \dashv \ell$ and $i_G \dashv \ell_G$ by $\eta$. 

It seems natural to ask whether sheafification `commutes' with change of base, in the sense that $\bcg (i\ell P) \cong i_G\ell_G(\bcg P)$ as sheaves. We will see that in the case where $G$ is only faithful and conservative, we at least obtain a distinguished morphism $\bcg( i \ell P) \to i_G \ell_G (\bcg P)$; however, if $G$ is full, the isomorphism is guaranteed.

\begin{proposition} \label{prop:bcg-sheaf} Let $J$ be a $\V$-coverage on $\C$ and $P \in [\C^\text{op},\V]$ be a sheaf for $J$. If $G$ is full, then $\bcg P$ is a sheaf for $\bcg J$. 
\end{proposition}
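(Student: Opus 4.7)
The plan is to transfer the sheaf condition for $P$ to $\bcg P$ directly through the change-of-base functor $\bcg$, exploiting the fact that, with the added hypothesis of fullness of $G$, the functor $\bcg$ is fully faithful on hom-sets of the underlying presheaf categories. More precisely, Proposition \ref{prop:bcg-limits} already gives faithfulness; combining Corollary \ref{cor:bcg-natural-iff-v-natural} with Remark \ref{remark:G0-is-GU}, fullness of $G$ translates to surjectivity of the action $\bcg : [\C^\text{op},\V]_0(A,B) \to [G_*\C^\text{op},\U]_0(\bcg A,\bcg B)$ for any $\V$-presheaves $A,B$.

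Given an object $x$, a sieve $S \in \bcg J(x)$, a generator $g \in \G_\U$, and a $\U$-natural $\alpha: S \to \{g, \bcg P\}$, I would then write $S = \bcg R$ for some $R \in J(x)$ (using the definition of $\bcg J$ in Proposition \ref{prop:bcg-coverage}) and invoke Corollary \ref{cor:bcg-cotensor} to identify $\{g, \bcg P\} \cong \bcg\{Fg, P\}$. Fully-faithfulness of $\bcg$ now lifts $\alpha$ uniquely to a $\V$-natural $\tilde\alpha: R \to \{Fg, P\}$ with $\bcg \tilde\alpha = \alpha$. As noted immediately before Proposition \ref{prop:bcg-coverage}, $F\G_\U$ is a generating family of finitely presentable objects in $\V$, so we may take $\G_\V := F\G_\U$; in particular $Fg \in \G_\V$, and the sheaf condition for $P$ yields a unique $\V$-natural $\tilde\beta: \C(-,x) \to \{Fg, P\}$ with $\tilde\beta \circ r = \tilde\alpha$.

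Applying $\bcg$ to $\tilde\beta$ and using the canonical $\U$-natural isomorphism $\bcg \C(-,x) \cong G_*\C(-,x)$ between the base change of the $\V$-enriched Yoneda representable and the $\U$-enriched Yoneda representable on $G_*\C$, I obtain the required $\beta: G_*\C(-,x) \to \{g, \bcg P\}$ with $\beta \circ \bcg r = \alpha$. For uniqueness, any other extension $\beta'$ lifts through the fully faithful $\bcg$ to some $\tilde \beta'$ with $\bcg \tilde\beta' = \beta'$; faithfulness of $\bcg$ forces $\tilde\beta' \circ r = \tilde\alpha$, and uniqueness in the sheaf condition for $P$ then gives $\tilde\beta' = \tilde\beta$, whence $\beta' = \beta$.

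The principal technical obstacle I anticipate is verifying the identification $\bcg \C(-,x) \cong G_*\C(-,x)$ as $\U$-presheaves on $G_*\C$: by definition $\bcg\C(-,x) = G^\U \circ G_*\C(-,x)$ has its hom-object action factoring through $G_*\V$, while $G_*\C(-,x)$ viewed as a $\U$-presheaf is the Yoneda embedding of the $\U$-category $G_*\C$. The required isomorphism follows from unpacking $G^\U$ via Remark \ref{remark:G0-is-GU} and matching against the composition in $G_*\C$ from Definition \ref{cb-functor}, but this explicit bookkeeping is the piece that needs care before the abstract fully-faithfulness argument takes over.
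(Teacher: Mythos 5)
Your proposal is correct and follows essentially the same route as the paper's proof: both use fullness of $G$ (together with faithfulness, Remark \ref{remark:G0-is-GU}, and Corollary \ref{cor:bcg-natural-iff-v-natural}) to lift the $\U$-enriched extension problem along $\bcg$, solve it using the sheaf condition for $P$, and invoke faithfulness for uniqueness. If anything, you are more explicit than the paper in lifting the arbitrary $\U$-natural test morphism $\alpha$ itself and in flagging the identification $\bcg\C(-,x) \cong G_*\C(-,x)$, both of which the paper's proof leaves implicit.
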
 

    \begin{proof} Say $P \in \text{Sh}_\V(\C,J)$, and suppose that $\gamma : \bcg \C(-,x) \to \{y, \bcg P\}$, $r : R \mono \C(-,x)$, $g \in \G_\V$ and $\alpha : R \to \{g,P\}$ are such that $$\bcg \alpha = \gamma \circ \bcg r.$$ 
    
    By Definition \ref{def:v-sheaf}, there exists a unique $\beta : \C(-,x) \to \{g,P\}$ for which $$ \gamma_y Gr_y = G\beta_y \circ Gr_y = G\alpha_y $$ for each object $y \in \C$. Since $G$ is full, $\gamma_y$ has the form $G\delta_y$ for some ${\delta_y : \C(y,x) \to \{Fg, Py\}}$. Since $G$ is faithful, uniqueness of $\beta$ implies that $\delta_y = \beta_y$, whence $\gamma = \bcg \beta$.
    \end{proof}

Given $S \in [\C^\text{op},\V]$ and $r : R \mono S$, define $\widehat{R}$ to be the pullback \[\begin{tikzcd}
	{\widehat{R}} & {i \ell(R)} \\
	S & {i \ell(S)}
	\arrow[from=1-1, to=1-2]
	\arrow[from=1-1, to=2-1]
	\arrow["{i \ell(r)}", from=1-2, to=2-2]
	\arrow["{\eta_S}", from=2-1, to=2-2]
\end{tikzcd}.\] The operation $R \mapsto \widehat{R}$ is a universal closure operation on $[\C^\text{op},\V]$ in the sense of \cite[1.4]{bq}. A presheaf $R$ is called \textbf{dense} if $\widehat{R} = S$. 

For visual simplicity, we define $$ \widetilde{\eta_Q}:=\bcg(\eta_Q)  \quad \text{ and } \quad \eta_{\widetilde{Q}} := \eta_{\bcg Q} .$$

\begin{theorem} Suppose $G$ is faithful and conservative. For $P \in [\C^\text{op},\V]$, the unit $$\eta_{\widetilde{P}} : \bcg P \to i_G \ell_G(\bcg P)$$ factors uniquely through $\bcg (i \ell P)$; and if $G$ is full, $\bcg (i\ell P) \cong i_G\ell_G(\bcg P)$.

    \begin{proof}
        Since $i$ is fully faithful, we have for any $Q \in [\C^\text{op},\V]$ that the unit ${\eta_Q : Q \to i \ell Q}$ is an isomorphism. Then $i \ell(\eta_Q)$ is an isomorphism, and since isomorphisms are pullback stable, we have $\widehat{Q} \cong i \ell Q$; in other words, $\eta_Q$ is dense. Since $\bcg$ preserves conical limits, we have $$\bcg \widehat{Q} \cong \widehat{\bcg Q} \cong \bcg( i \ell Q ),$$ so that $\widetilde{\eta_Q}$ is dense.

        The result \cite[2.2]{bq} says that $P$ is (isomorphic to) a sheaf for $J$ exactly when, for every dense monomorphism $r : R \mono Q$ and morphism $s: R \to P$, there is a unique $t : Q \to P$ for which $r = ts$. In particular, since $i_G \ell_G (\bcg P)$ is a sheaf for $\bcg J$ and $\widetilde{\eta_P}: \bcg P \to \bcg(i \ell P)$ is dense, there is a unique morphism $\tau$ for which \[\begin{tikzcd}
	{\bcg P} & {i_G \ell_G (\bcg P)} \\
	{\bcg(i \ell P)}
	\arrow["{\eta_{\widetilde{P}}}", from=1-1, to=1-2]
	\arrow["{\widetilde{\eta_P}}"', tail, from=1-1, to=2-1]
	\arrow["\tau"', dashed, from=2-1, to=1-2]
        \end{tikzcd}\] commutes. If $G$ is full, Proposition \ref{prop:bcg-sheaf} says that $\bcg(i \ell P)$ is a sheaf for $\bcg J$, so the same argument yields a unique factorization of $\widetilde{\eta_P}$ through $\eta_{\widetilde{P}}$, say $\sigma \eta_{\widetilde{P}} = \widetilde{\eta_P}$. We then have, for example, $$\tau \sigma \eta_{\widetilde{P}} = \tau \widetilde{\eta_P} = \eta_{\widetilde{P}},$$ so since $\eta_{\widetilde{P}}$ is an isomorphism, $\tau \sigma$ is an identity. The same argument shows that $\sigma \tau$ is an identity, so we have $\bcg(i \ell P) \cong i_G \ell_G(\bcg P)$.
    \end{proof}
\end{theorem}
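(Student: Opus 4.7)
The plan is to use the universal extension property characterizing sheaves along dense monomorphisms from \cite[2.2]{bq}: $F$ is a $J$-sheaf iff every morphism out of a $J$-dense subobject $R \mono S$ extends uniquely to $S$. Once $\widetilde{\eta_P} : \bcg P \to \bcg(i\ell P)$ has been identified as a $\bcg J$-dense monomorphism, the $\bcg J$-sheaf $i_G\ell_G(\bcg P)$ will supply the unique $\tau$ factoring $\eta_{\widetilde{P}}$ through it; and when $G$ is full, Proposition~\ref{prop:bcg-sheaf} makes $\bcg(i\ell P)$ a $\bcg J$-sheaf as well, so running the same reasoning in reverse produces a two-sided inverse.

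The first step is to establish that $\eta_Q : Q \to i\ell Q$ is a $J$-dense monomorphism for any $Q \in [\C^\text{op},\V]$. Since $i$ is fully faithful in $\ell \dashv i$, the counit $\epsilon_{\ell Q}$ is invertible; the triangle identity $\epsilon_{\ell Q} \circ \ell(\eta_Q) = \id$ then forces $\ell(\eta_Q)$---hence $i\ell(\eta_Q)$---to be an isomorphism, and pullback-stability of isomorphisms applied to the square defining the universal closure $\widehat{Q}$ gives $\widehat{Q} \cong i\ell Q$.

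The main obstacle is transferring density across $\bcg$, since the $\bcg J$-closure of a subobject is defined in terms of $i_G\ell_G$ rather than $\bcg \circ (i\ell)$: the limit-preservation of $\bcg$ from Proposition~\ref{prop:bcg-limits} does not, by itself, identify $\bcg\widehat{Q}$ with the $\bcg J$-closure of $\bcg Q$. I would handle this via a pointwise characterization: $r : R \mono S$ is $J$-dense iff for every $x \in \C$ and generalized element $f : g \to Sx$ with $g \in \G_\V$, the pullback sieve $(R)_f$ lies in $J(x)$. Choosing $\G_\V = F\G_\U$---permissible by the discussion preceding Proposition~\ref{prop:bcg-coverage}---every generalized element $g' \to \bcg(i\ell P)(x)$ with $g' \in \G_\U$ corresponds under $F \dashv G$ to some $Fg' \to i\ell P(x)$ with $Fg' \in \G_\V$; preservation of pullbacks by $\bcg$, combined with the definition of $\bcg J$ from Proposition~\ref{prop:bcg-coverage}, then identifies the two pullback sieves as lying in the respective coverages, yielding $\bcg J$-density of $\widetilde{\eta_P}$.

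With density established, \cite[2.2]{bq} applied to the sheaf $i_G\ell_G(\bcg P)$ produces the unique $\tau : \bcg(i\ell P) \to i_G\ell_G(\bcg P)$ factoring $\eta_{\widetilde{P}}$ through $\widetilde{\eta_P}$. For the isomorphism claim, fullness of $G$ makes $\bcg(i\ell P)$ a $\bcg J$-sheaf by Proposition~\ref{prop:bcg-sheaf}; applying the first step to $\bcg P$ in place of $Q$ yields $\bcg J$-density of $\eta_{\widetilde{P}}$, so \cite[2.2]{bq} furnishes a $\sigma : i_G\ell_G(\bcg P) \to \bcg(i\ell P)$ with $\sigma \circ \eta_{\widetilde{P}} = \widetilde{\eta_P}$, and the uniqueness clause forces $\tau\sigma$ and $\sigma\tau$ to be identities.
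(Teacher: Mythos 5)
Your proposal is correct and follows the same overall skeleton as the paper's proof (density of the unit via full faithfulness of the inclusion, transfer of density across $\bcg$, then \cite[2.2]{bq} to produce $\tau$ and, when $G$ is full, $\sigma$), but it diverges at the crux step in a way worth noting. The paper transfers density by asserting $\bcg\widehat{Q} \cong \widehat{\bcg Q}$ directly from limit-preservation of $\bcg$; as you correctly observe, this does not follow on its own, since $\widehat{\bcg Q}$ is the closure computed with $i_G\ell_G$ while $\bcg\widehat{Q}$ is built from $\bcg\circ(i\ell)$, and comparing them is essentially the content of the theorem. Your workaround---the pointwise characterization of density in terms of pullback sieves landing in the coverage, combined with the choice $\G_\V = F\G_\U$, the bijection of generalized elements under $F \dashv G$, and the pullback computation already carried out in the proof of Proposition \ref{prop:bcg-coverage}---is a genuinely different key lemma that closes this gap and is the more defensible route. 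Two further small improvements on your side: you derive only that $i\ell(\eta_Q)$ is invertible (via the counit and a triangle identity), where the paper asserts that $\eta_Q$ itself is an isomorphism (which would trivialize the statement); and you obtain $\tau\sigma = \id$ and $\sigma\tau = \id$ from the uniqueness clause of \cite[2.2]{bq} rather than from invertibility of $\eta_{\widetilde{P}}$. The one point you share with the paper and should still flag is that \cite[2.2]{bq} is stated for dense \emph{monomorphisms}, and neither argument verifies that $\eta_P$, hence $\widetilde{\eta_P}$, is monic for arbitrary $P$.
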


\section{Gabriel topologies}

Our goal in this section is to illustrate via an example (namely \ref{bad-gabriel}) that the conclusion of Theorem \ref{thm:injective-coverages} may fail if the functor $G : \V \to \U$ is not faithful. Toward that end, we generalize Definition \ref{example:ring-gabriel} of a Gabriel topology on a ring---that is, on a monoid object in $\cat{Ab}$---to monoid objects in an arbitrary $\V$ satisfying \ref{assumptions-V}.

Perhaps among the easiest $\V$-categories to understand are one-object $\V$-categories, which are easily seen to coincide with the monoid objects in $\V$---that is to say, those objects $A$ of $\V$ equipped with suitably coherent morphisms $m : A \otimes A \to A$ and $\nu : *_{\V} \to A$. Denoting the opposite monoid of $A$ by $A^\text{op}$, we can use any such $A$ to define a \textbf{right $A$-module} in $\V$: an object $M$ of $\V$ equipped with a morphism $$ \psi: A^{\text{op}} \otimes M \to M, $$ called a \textbf{right $A$-action} on $M$, satisfying coherence conditions encoding associativity and unitality of the action. (For brevity, we do not discuss coherence in detail; the uninitiated reader may consult \cite[VII.3-4]{cwm}.) In particular, a monoid object $(A, m, \nu)$ of $\V$ is always a right module over itself. To emphasize that we are viewing $A$ as a right $A$-module, we will sometimes use the notation $A_A$. By an \textbf{$A$-submodule} of $M$, we mean an $A$-module $N$ admitting a monomorphism $\iota : N \mono M$ in $\V$, and whose $A$-action is compatible with that of $M$ in a sense that we will make precise below. 

When $\V$ is closed monoidal, as in the present setting, we can `transpose' a right action and its requisite coherence diagrams, obtaining a morphism $$ \phi : A^\text{op} \to \V(M,M)$$ in $\V_0$ which satisfies conditions encoding compatibility of the monoidal structure on $A^\text{op}$ with the composition and identities in $\V$. If we shift our perspective and view $A^\text{op}$ as a $\V$-category with a single object $\bullet$, the coherence of $\phi$ expresses $\V$-functoriality of the assignment $\bullet \mapsto M$. From this perspective, $\V$-sieves have straightforward descriptions in terms of subobjects of $A$.

\begin{proposition} \label{v-ideal} If $\V$ is closed monoidal and $\A$ is a one-object $\V$-category with $\A(\bullet,\bullet) = A \in \text{Mon}(\V)$, a $\V$-sieve on $\bullet$ is equivalently an $A$-submodule of $A_A$.
\end{proposition}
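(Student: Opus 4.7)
The plan is to unwind Definition~\ref{v-sieve} in the special case $\C = \A$, exploiting the standard correspondence between $\V$-presheaves on a one-object $\V$-category and right modules over the corresponding monoid. Concretely, a $\V$-functor $M : \A^{\text{op}} \to \V$ is determined by a single object $M(\bullet) \in \V$---which by abuse I also call $M$---together with an internal composition map $\phi : A^{\text{op}} \to \V(M,M)$; under closedness of $\V$, this transposes to a morphism $\psi : A^{\text{op}} \otimes M \to M$, and the $\V$-functoriality axioms transpose to exactly the associativity and unit axioms for a right $A$-action on $M$.

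Under this dictionary, the first step is to observe that the representable $\A(-,\bullet)$ corresponds to $A$ equipped with right multiplication by itself: the composition morphism of $\A$ is, by construction, the monoid multiplication $m : A \otimes A \to A$, so the associated $\V$-functor is precisely $A_A$ in the sense of the paragraph preceding the statement.

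Next, for two $\V$-presheaves corresponding to right $A$-modules $(M, \psi_M)$ and $(N, \psi_N)$, a $\V$-natural transformation $\alpha : M \Rightarrow N$ collapses to a single component $\alpha_\bullet : *_\V \to \V(M,N)$, which transposes (again using closedness) to an ordinary morphism $\bar{\alpha} : M \to N$ in $\V_0$. Writing out the hexagon expressing $\V$-naturality and transposing each edge through the internal-hom/tensor adjunction, one obtains the square $\bar{\alpha} \circ \psi_M = \psi_N \circ (\id_{A^{\text{op}}} \otimes \bar{\alpha})$, which is exactly $A$-linearity of $\bar{\alpha}$. This transposition calculation is the technical heart of the argument and the main routine obstacle: one must be careful to check that composition in $\V$ (viewed as a $\V$-category via its closed structure) matches the action-style multiplication on both sides so that the transposed equation really is $A$-linearity on the nose rather than up to coherence.

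Finally, by the conventions of Section~2, a sieve on $\bullet$ is an equivalence class of $\V$-natural transformations $R \mono \A(-,\bullet)$ whose single component is a monomorphism in $\V_0$. Via the dictionary above, each such transformation corresponds to an $A$-linear morphism $\bar{R} \to A$ in $\V$ whose underlying $\V$-morphism is monic---precisely the data of an $A$-submodule of $A_A$. Since the equivalence relation on sieves (iso over $\A(-,\bullet)$) and the equivalence relation on submodules (iso over $A_A$ through an $A$-linear iso) are matched by the same dictionary, passing to equivalence classes delivers the asserted bijection between $\V$-sieves on $\bullet$ and $A$-submodules of $A_A$.
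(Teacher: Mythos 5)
Your proposal is correct and follows essentially the same route as the paper: unpack the hom-component $\phi : A^{\text{op}} \to \V(I,I)$ of the subfunctor, transpose through the closed structure to a map $\psi : A^{\text{op}} \otimes I \to I$ so that functoriality becomes associativity and unitality of a right action, and transpose $\V$-naturality of the monic inclusion into $A$-linearity of a monomorphism $I \mono A$. The only cosmetic difference is that you phrase this as an instance of the general presheaves-on-a-one-object-category/modules dictionary and explicitly note the matching of equivalence relations, whereas the paper works directly with the subfunctor $\mathcal{I}(-)$; the underlying computation is identical.
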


\begin{proof} We unpack the definition of a subfunctor $\mathcal{I}(-)$ of $\A(-,\bullet) : \A^\text{op} \to \V$. Say $\mathcal{I}(-) : \A^\text{op} \to \V$ sends $\bullet \mapsto I$, and let $\phi : \A^\text{op}(\bullet,\bullet) = A^\text{op} \to \V(I,I)$ be the hom-component of $\mathcal{I}(-)$. Functoriality of $\mathcal{I}(-)$ says that the diagrams \[\begin{tikzcd}
	{A^\text{op} \otimes A^\text{op}} & {A^\text{op}} && {*_\V} & {A^\text{op}} \\
	{\V(I,I) \otimes \V(I,I) } & {\V(I,I) } &&& {\V(I,I) }
	\arrow["m", from=1-1, to=1-2]
	\arrow["{\phi \otimes \phi}"', from=1-1, to=2-1]
	\arrow["\phi", from=1-2, to=2-2]
	\arrow["\nu", from=1-4, to=1-5]
	\arrow["{\text{id}}"', from=1-4, to=2-5]
	\arrow["\phi", from=1-5, to=2-5]
	\arrow["\circ"', from=2-1, to=2-2]
\end{tikzcd}\] commute. Denoting the transpose of $\phi$ by $\psi : A^\text{op} \otimes I \to I$, commutativity of the diagrams above is equivalent to commutativity of \[\begin{tikzcd}
	{A^\text{op} \otimes I} && I & {*_\V \otimes I} & {A^\text{op} \otimes I} \\
	{(A^\text{op} \otimes A^\text{op}) \otimes I} && I && I \\
	{(\V(I,I) \otimes \V(I,I)) \otimes I} && I
	\arrow["\psi", from=1-1, to=1-3]
	\arrow[shift right, no head, from=1-3, to=2-3]
	\arrow["{\nu \otimes \text{id}}", from=1-4, to=1-5]
	\arrow["{\lambda^{-1}}"', from=1-4, to=2-5]
	\arrow["\psi", from=1-5, to=2-5]
	\arrow["{m \otimes \text{id}}", from=2-1, to=1-1]
	\arrow["h", from=2-1, to=2-3]
	\arrow["{(\psi \otimes \psi) \otimes \text{id}}"', from=2-1, to=3-1]
	\arrow[no head, from=2-3, to=1-3]
	\arrow[shift right, no head, from=2-3, to=3-3]
	\arrow[no head, from=2-3, to=3-3]
	\arrow["{\circ^\flat}"', from=3-1, to=3-3]
\end{tikzcd},\] where $h = \psi(1 \otimes \psi)\alpha$, and with $\alpha$ and $\lambda$ respectively denoting the associator and left-unitor in $\V$. Commutativity of the top square in the left-hand diagram above is equivalent to associativity of $\psi$ as a right action of $A$ on $I$, and the triangle is equivalent to unitality. We see that $I$ is a right $A$-module.

Having a $\V$-natural transformation $\iota : \mathcal{I}(-) \Rightarrow \A(-,\bullet)$ with monic components says that we have a monomorphism $I \mono A$ in $\V_0$ which satisfies \[\begin{tikzcd}
	{A^\text{op}} & {\V(I,I)} \\
	{\V(A,A)} & {\V(I,A)}
	\arrow["{\iota_*}", from=1-2, to=2-2]
	\arrow["{\iota^*}"', from=2-1, to=2-2]
	\arrow["\phi", from=1-1, to=1-2]
	\arrow["m^\flat"', from=1-1, to=2-1]
\end{tikzcd},\] expressing compatibility of the right $A$-action on $I$ with the right $A$-action of $A$ on itself. 

In the converse direction, say given a right $A$-submodule $I$ of $A_A$, it is easy to check (by showing that commutativity is satisfied in the diagrams above) that $\bullet \mapsto I$ determines a $\V$-subfunctor of $\A(-,\bullet)$.
\end{proof}

Pullbacks of sieves on $\bullet \in \A$, as in \ref{v-gt} (T2), are somewhat simpler to describe than in the general case. Given $f : G \to \A(\bullet,\bullet) = A$, $f$ induces a morphism $$ G \to [\A^\text{op},\V](\A(-,\bullet),\A(-,\bullet)) $$ by the enriched Yoneda lemma, and thus a morphism $$\A(-,\bullet) \to \{G,\A(-,\bullet)\}.$$ Let $\iota: \mathcal{I}(-) \mono \A(-,\bullet)$. Since $\A$ has only one object, the pullback of the diagram \[\begin{tikzcd}
	{\A(-,\bullet)} && {\{G,\A(-,\bullet)\}} && {\{G, \mathcal{I}(-)\}}
	\arrow["f", from=1-1, to=1-3]
	\arrow["\iota"', hook', from=1-5, to=1-3]
\end{tikzcd}\] in $[\A^\text{op},\V]$ is uniquely determined by the pullback
\begin{equation} \label{little-pback}
\begin{tikzcd}
	A && {\V(G,A)} && {\V(G,I)}
	\arrow["f", from=1-1, to=1-3]
	\arrow["\iota"', hook', from=1-5, to=1-3]
\end{tikzcd}\end{equation} in $\V$. In the case where $\A$ has only one object, we identify the pullback $I_f$ in the functor category with the pullback of the diagram \ref{little-pback} in $\V$.

In light of the discussion above, we see that Example \ref{example:ring-gabriel} is the case $\V = \cat{Ab}$ of the following:

\begin{definition} \label{v-gabriel} Given a monoid object $A$ of $\V$, a \textbf{(right) $\V$-Gabriel topology} on $A$ is a non-empty family $\mathfrak{R}$ of right $A$-submodules of $A_A$ such that

\begin{itemize}
		\item[(V1)] if $I \in \mathfrak{R}$ and $J$ is a right $A$-submodule of $A_A$ such that $I$ is a right $A$-submodule of $J$, then $J \in \mathfrak{R}$;
		
		\item[(V2)] for any $(\iota : I \mono A) \in \mathfrak{R}$, $G \in \V_{fp}$, and $f : G \to A$ in $\V_0$, the pullback $I_f$ of the diagram \ref{little-pback} is in $\mathfrak{R}$;
		
		\item[(V3)] if $I \in \mathfrak{R}$ and $J$ is a right $A$-submodule of $A_A$ such that $J_f \in \mathfrak{R}$ for all $f : G \to I$, then $J \in \mathfrak{R}$.
	\end{itemize}

    \end{definition} 
    
    Squinting at \ref{v-gabriel}, the reader might guess that the following is true, although it may not be obviously apparent that (V1) is a perfect analogue of (T1) in \ref{v-gt}. We provide a bit more detail:

\begin{proposition} \label{v-topology-v-gabriel}
    Let $A \in \text{Mon}(\V)$, and let $\mathfrak{R}$ be a set of right $A$-submodules of $A_A$. Denote by $\mathcal{A}$ the one-object $\V$-category with $\A(\bullet,\bullet) = A$. Given a right $A$-submodule $I \mono A$, denote the $\V$-subfunctor $\bullet \mapsto I$ of $\A(-,\bullet)$ by $\mathcal{I}(-)$. The following are equivalent:

    \begin{itemize}
        \item[(i)] $\mathfrak{R}$ is a $\V$-Gabriel topology on $A$;
        \item[(ii)] $\Tau := \{ \mathcal{I}(-) : I \in \mathfrak{R}\}$ is a $\V$-topology on $\A$.
    \end{itemize}

    \begin{proof} That (T2) and (T3) are respectively equivalent to (V2) and (V3) follows directly from the definitions \ref{v-ideal} and \ref{little-pback}. Moreover if (V1) holds for $\mathfrak{R}$, the fact that $\mathfrak{R}$ is nonempty immediately implies (T1).

    The only subtlety is in proving that (V1) holds for $\mathfrak{R}$, given (ii). Suppose that $I \in \mathfrak{R}$ is such that $\iota : I \to A$ factors as \[\begin{tikzcd}
	I & J & A
	\arrow["i", tail, from=1-1, to=1-2]
	\arrow["j", tail, from=1-2, to=1-3]
    \end{tikzcd}\] for some $A$-submodule $J$ of $A_A$. If $f : G \to I$ has $G \in \V_{fp}$, then $\iota f = jif : A \to \V(G,A)$, so that the pullback $\mathcal{J}_f$ of \[\begin{tikzcd}
	{\A(-,\bullet)} & {\{G,\A(-,\bullet)\}} & {\{G,\mathcal{J}(-)\}}
	\arrow["{\iota f = jif}", from=1-1, to=1-2]
	\arrow["{j_*}"', from=1-3, to=1-2]
\end{tikzcd}\] is $\mathcal{A}(-,\bullet) \in \Tau$. Since $\Tau$ is a $\V$-topology, we have $\mathcal{J}(-) \in \Tau$, so that $J \in \mathfrak{R}$.
    \end{proof}
\end{proposition}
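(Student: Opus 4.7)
The plan is to exploit the bijection between $\V$-subfunctors of $\A(-,\bullet)$ and right $A$-submodules of $A_A$ established in Proposition \ref{v-ideal}, together with the identification (valid in the one-object case) of the pullback of a sieve in $[\A^{\text{op}},\V]$ with the pullback in $\V$ of diagram \ref{little-pback}. Under these two identifications, the axioms on both sides are stated in terms of the same underlying data, so the hope is that the matching will be nearly tautological, with one interesting exception.

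Under these identifications, axioms (T2) and (V2) become the same condition about pullbacks lying in the family (modulo the standard fact that it suffices to test on a generating family of finitely presentable objects), and likewise (T3) matches (V3). The direction (i) $\Rightarrow$ (ii) then follows quickly: since $\mathfrak{R}$ is nonempty by hypothesis, we may pick $I \in \mathfrak{R}$; as $I$ is a submodule of $A_A$, axiom (V1) yields $A_A \in \mathfrak{R}$, which translates under \ref{v-ideal} to the maximal sieve $\A(-,\bullet) \in \Tau$, i.e., (T1).

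The main obstacle is the reverse direction (ii) $\Rightarrow$ (i), specifically deriving (V1), since the $\V$-topology axioms contain no explicit saturation condition for sieves. The key idea is to invoke (T3): given $I \in \mathfrak{R}$ whose structural monomorphism $\iota: I \mono A$ factors as $I \mono J \mono A$ through an $A$-submodule $J$, for any finitely presentable $G$ and any generalized element $f: G \to I$, the composite $G \to A$ factors through $J$. A direct computation using \ref{little-pback} then shows that the pullback $\mathcal{J}_f$ is the maximal sieve $\A(-,\bullet) \in \Tau$. Applying (T3) with $\mathcal{I}$ as the witnessing sieve (whose membership in $\Tau$ is given) yields $\mathcal{J} \in \Tau$, so $J \in \mathfrak{R}$, establishing (V1). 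Nonemptyness of $\mathfrak{R}$ is immediate from (T1).
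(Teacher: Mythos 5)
Your proposal is correct and follows essentially the same route as the paper: both reduce (T2)/(V2) and (T3)/(V3) to the identifications of Proposition \ref{v-ideal} and diagram \ref{little-pback}, derive (T1) from nonemptiness plus (V1), and obtain (V1) from (ii) by observing that for $f: G \to I$ the pullback $\mathcal{J}_f$ is the maximal sieve and then applying (T3) with $\mathcal{I}$ as the witnessing covering sieve. No substantive differences.
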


We briefly mention an application of Theorem \ref{thm:injective-coverages} in this setting. The discussion at the beginning of this section shows that if $\A$ is a one-object $\V$-category, the $\V$-presheaves on $\A$ are exactly the right $\A$-module objects in $\V$, so that $[\A^\text{op},\V] = \text{Mod-}\A$. Theorem \cite[1.5]{bq} says that there is a bijection between reflective subcategories of $\text{Mod-}\A$ and $\V$-topologies on $\A$. If $G : \V \to \U$ is a faithful, conservative right adjoint, Theorem \ref{thm:injective-coverages} says that any reflective subcategory of $\text{Mod-}\A$ corresponds uniquely to a reflective subcategory of $G_*\A$.

\begin{example}
    Consider the case where $G$ is restriction of scalars along a ring homomorphism $f : R \to S$ (Example \ref{example:restriction-of-scalars}) and $\A$ is an $S$-algebra, so that $G_*\A$ is simply $\A$ viewed as an $R$-algebra. In \cite[VI.4.2]{Stenstrom_1975}, reflective subcategories of $\text{Mod-}\A$ are identified with their reflectors, there referred to as left exact preradical functors on $\text{Mod-}\A$. The argument above shows that any left exact preradical on $\text{Mod-}\A$ corresponds uniquely to a left exact preradical on $\text{Mod-}G_*\A$.
\end{example}

\subsection{Graded Gabriel topologies on a graded algebra}

For the rest of this section, we consider a field $k$, and set $\V = \cat{grMod}_k$, the category of $\Z$-graded $k$-modules. Recall that the monoidal unit in $\V$ is $k$, viewed as a $\Z$-graded algebra concentrated in degree 0, and the internal hom in $\V$ is defined as $$ \V(M,N) := \bigoplus_{i \in \Z} \hom_i(M,N),$$ where $\hom_i(M,N)$ denotes the collection of $k$-module homomorphisms $f$ for which $f(M_j) \subset N_{j+i}$, which we call \textbf{morphisms of degree $i$}. Uninitiated readers can find a detailed treatment of graded algebras in \cite{nastasescu-grt}.

The functor $$\hom_\V(k,-) : \V \to \cat{Set}$$ has a left adjoint $k [-]$ in $\cat{Cat}$ which takes a set $X$ to the free graded $k$-module $k[X]$ generated in degree 0 by the elements of $X$. Since the functor $\hom_\V(k,-)$ is lax and the functor $k[-]$ is strong monoidal, they comprise an adjunction in $\mathbf{MonCat}_\ell$  by \cite[1.5]{doctrinal-adjunction}. We will see that $k [-] \dashv \hom_\V(k,-)$ yields an example where the assignment $\bcg(-)$ of Theorem \ref{thm:injective-coverages} is not injective.

\begin{example} \label{notfaithful} $\hom_\V(k,-) : \V \to \cat{Set}$ is not faithful - to see this, take any two distinct graded $k$-modules, say $M$ and $N$, with $M_0 = N_0 = 0$, and recall that $$\hom_\V(k,M) \cong \hom_{k}(k,M_0) \cong \{0\} $$ (and similarly for $N$). As long as there exists a non-trivial graded module homomorphism $M \to N$, for example, in the case of $M$ and $N$ with homogeneous components defined by $$ M_i = \begin{cases} 0 & i < 2 \\  k & i \geq 2 \end{cases}, \quad N_i = \begin{cases} 0 & i < 1 \\ k & i \geq 1 \end{cases}, $$ the map $$\V(M,N) \to \cat{Set}(\hom_\V(k,M), \hom_\V(k,N)) \cong \{0\}$$ is not injective.
\end{example}

Below, we construct an example of two $\V$-coverages which correspond to the same $\cat{Set}$-coverage under change of base, toward which our first task is to describe $\V$-sieves and their pullbacks. 

First, we refresh some terminology: A \textbf{homogeneous element} of a graded ring $A = \bigoplus_{i \in \Z} A_i$  is simply an element of $A_i$ for some $i$, the set of all such we will denote by $h(A)$. Recall that a left or right ideal $I$ of $A$ is called \textbf{homogeneous} if whenever $\sum a_i \in I$, each homogeneous element $a_i \in A_i$ in the sum is itself an element of $I$; or equivalently, if $I$ is a graded $A$-submodule of $A$. 

As a corollary to \ref{v-ideal}, we have the following:

\begin{corollary} \label{grmod-sieve} Given $A \in \cat{grAlg}_k$, viewed as a $\cat{grMod}_k$-category with one object $\bullet$, the $\V$-sieves on $\bullet$ are exactly the homogeneous right ideals of $A$. \end{corollary}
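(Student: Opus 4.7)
The plan is to specialize Proposition \ref{v-ideal} to the case $\V = \cat{grMod}_k$ and unpack what a right $A$-submodule of $A_A$ amounts to in this category. By \ref{v-ideal}, a $\V$-sieve on $\bullet$ is the same as a monomorphism $\iota : I \mono A$ in $\cat{grMod}_k$ equipped with a right $A$-action $\psi : A^\text{op} \otimes I \to I$ compatible with the action of $A$ on itself. So the content of the corollary is: such data correspond bijectively to homogeneous right ideals of $A$.

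First I would observe that monomorphisms in $\cat{grMod}_k$ are exactly the degreewise-injective graded $k$-linear maps, so $I$ may be identified with its image in $A$, namely a graded $k$-submodule $\bigoplus_{i \in \Z} I_i$ with $I_i \subseteq A_i$. The compatibility square in \ref{v-ideal} — expressing that $\iota^* \circ m^\flat = \iota_* \circ \phi$ — then says exactly that for each homogeneous $a \in A_i$ and each $x \in I_j$, the product $x \cdot a$ lies in $I_{i+j}$. In particular $I$ is closed under right multiplication by every homogeneous element of $A$, and since $I$ is graded it is closed under right multiplication by all of $A$. This is precisely the statement that $I$ is a homogeneous right ideal of $A$.

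Conversely, given a homogeneous right ideal $I$ of $A$, the grading $I = \bigoplus_i I_i$ makes $I$ an object of $\V$, the inclusion is a graded $k$-linear monomorphism, and the restriction of right multiplication $A^\text{op} \otimes I \to I$ is graded of degree $0$ because $I$ is homogeneous. One then checks routinely that the resulting data assemble into a right $A$-submodule of $A_A$ in the enriched sense, and that the two passages are mutually inverse.

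The only step with any content is identifying the enriched submodule condition with homogeneity; the rest is bookkeeping from \ref{v-ideal}. The reason the enriched setting forces homogeneity — rather than recovering all right ideals of the underlying ring — is that morphisms in $\cat{grMod}_k$ are required to be of degree $0$, so the monomorphism $I \mono A$ must respect the grading on the nose, which is exactly the defining property of a homogeneous ideal.
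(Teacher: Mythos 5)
Your proposal is correct and follows the same route as the paper: the paper derives the corollary by combining Proposition \ref{v-ideal} with the observation (recalled just beforehand) that a homogeneous right ideal is the same thing as a graded $A$-submodule of $A$, which is exactly the identification you carry out in detail. Your extra unpacking of the compatibility square and the degree-$0$ constraint on monomorphisms in $\cat{grMod}_k$ is accurate and just makes explicit what the paper leaves implicit.
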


As described in \cite[p. 21]{nastasescu-grt}, $\V$ admits a dense generating family: For $i \in \Z$, define the homogeneous components of a graded $k$-module $k(-i)$ by $$ k(-i)_j := k_{j-i}, $$ so that $$ k(-i)_{i} = k_0 = k, $$ and $k(-i)_j = 0$ otherwise. Any graded $k$-module is the filtered colimit of its finite-dimensional graded subspaces, and any finite-dimensional graded $k$-module is isomorphic to the direct sum of the objects in $\{k(-j)\}_{j \in J}$ for some $J \subset \Z$. Thus, to construct the pullback as in \ref{v-gabriel} (V2), we need only consider pullbacks along graded module morphisms $f : k(-i) \to A$. 

Given a morphism $f : k(-i) \to A$ of graded $k$-modules and a homogeneous right ideal $I \subset A$, the pullback of the diagram \begin{equation} \label{grmod-pullback} \begin{tikzcd}
	{A} && {\V(k(-i),A)} && {\V(k(-i),I)}
	\arrow["f", from=1-1, to=1-3]
	\arrow[rightarrow, hook', "\text{inc}"', from=1-5, to=1-3]
\end{tikzcd}, \end{equation} where $f$ is identified with the map $1_A \mapsto f(1_k)$, is the homogeneous right ideal $(I : f(1_k))$.

With \ref{grmod-sieve} and \ref{grmod-pullback} in hand, we can define an analogue of \ref{example:ring-gabriel} for graded $k$-algebras.

\begin{definition} \label{graded-gabriel} A \textbf{graded (right) Gabriel topology} on $A$ is a non-empty set $\mathfrak{R}$ of homogeneous right ideals of $A$ satisfying

\begin{enumerate}
    \item[(G1)] if $I \in \mathfrak{R}$ and $J$ is a homogeneous right ideal of $A$ for which $I \subset J$, then $J \in \mathfrak{R}$;

    \item[(G2)] if $I \in \mathfrak{R}$, then $(I : x) \in \mathfrak{R}$ for all $x \in h(A)$;

    \item[(G3)] if $I \in \mathfrak{R}$ and $J$ is a homogeneous right ideal of $A$ such that $(J : x) \in \mathfrak{R}$ for all $x \in h(I)$, then $J \in \mathfrak{R}$.
\end{enumerate}

\end{definition}
    
Any multiplicatively closed set $S$ of homogeneous elements of $A$ gives rise to a graded Gabriel topology by letting $H_S$ be the collection of homogeneous right ideals defined by $$ H_S := \{I \mid (I : a) \cap S \neq \nada \; \text{for all homogeneous elements } a \in A\}, $$ as in \cite[II.9.11]{nastasescu-grt}.

    \begin{example} \label{bad-gabriel}
        For a field $k$, take $A$ to be the commutative ring $k[x, y]$, graded by polynomial degree. Set $$ S := \{1, x, x^2,...\}  \text{ and } T := \{1, y, y^2,...\}, $$ and consider the change of base given by $$ G = \hom_\V(k,-) : \V \to \cat{Set}. $$

        The families $S$ and $T$ generate distinct $\V$-coverages on $A$, namely $$ H_S = \{ I \triangleleft A : I \text{ is homogeneous and } x^n \in I \text{ for some } n\}$$ and $$H_T = \{I \triangleleft A : I \text{ is homogeneous and } y^n \in I \text{ for some } n\},$$ where the notation $I \triangleleft A$ means $I$ is an ideal of $A$.
        
        On the other hand, given any $M \in \V$, we have $$ \hom_\V(k, M) \cong \hom_k(k,M_0) \cong M_0,$$ so in particular, we have $\bcg I \cong \hom_k(k,I_0)$ for any $I$ in $H_S$ or $H_T$. Recall that the degree-0 elements of $A$ are exactly the scalars $k$; thus, if $I \neq A$, we have $I_0 = \{0\}$ (otherwise $I$ contains a unit of $A$), and if $I = A$, we have $I_0 = k$. Then $$\bcg H_S = \bcg H_T = \{k, (0)\}.$$ We see that the conclusion of Theorem \ref{thm:injective-coverages} fails in this case.
        
    \end{example}

\bibliographystyle{amsplain}
\bibliography{refs}

\end{spacing}

\end{document}